\documentclass{article}
\usepackage{amsmath}
\usepackage{amssymb}
\usepackage{amsthm}
\usepackage{multicol}

\usepackage{subcaption} 


\usepackage[latin1]{inputenc}
\usepackage[T1]{fontenc}
\usepackage{play}
\usepackage{booktabs}
\usepackage{longtable} 
\usepackage{array} 
\usepackage{graphicx}

\usepackage{tikz}

\newtheorem{Theorem}{Theorem}

\newtheorem{corollary}[Theorem]{Corollary}

\newtheorem{lemma}{Lemma}

\newtheorem{proposition}{Proposition}
\newtheorem{example}{Example}

\begin{document}
\title{{Symmetries of Random Partitions}}
\author{Alexander Gnedin\\ {\small Queen Mary University of London}}

\maketitle
\begin{abstract}
\noindent
This paper is motivated by a recent result of Pitman and Yakubovich~\cite{PY} stating that a partially exchangeable, stationary (PES) random partition of $\mathbb{N}$ is exchangeable. This echoes an earlier theorem of Kallenberg~\cite{Kallenberg} on the equivalence of spreadability (contractability) and exchangeability for infinite partitions. We revise the hierarchy of symmetries with a focus on partitions of finite sets $[n]$, and ask about the extent to which these relaxed symmetry types differ from exchangeability, framing the question in terms of the geometry of the polytope of distributions defined by the symmetry constraints. We show that single-orbit exchangeable partitions remain extreme among spreadable and PES distributions, and that $n=5$ is the first case where non-exchangeable extreme partitions occur, causing the associated polytopes to deviate from a simplex structure.
\end{abstract}
\noindent
De Finetti-type theorems play a central role in connecting random combinatorial structures on finite sets $[n]:=\{1,\ldots,n\}$ with their projective limits.
In the setting of partitions, Kingman's representation theorem \cite{Kingman} characterises an exchangeable partition of ${\mathbb N}$ as a unique mixture of basic paintbox processes, which appear as very precise analogues of finite occupancy models.
Pitman \cite{PitmanPTRF} introduced a larger and more flexible class of partially exchangeable partitions which account for the standard ordering of blocks by their starters; in his refinement of the paintbox representation, the exchangeable case is distinguished by invariance under size-biased permutation of the sequence of block frequencies.
Both types of random symmetry fall under the overarching concept of sufficiency \cite{DF, Dynkin}, which in the combinatorial context amounts to a consistent in $n$ division of the ensemble of labelled configurations into strata where the conditional distribution is uniform.
This line has been extended to many other combinatorial structures including ordered partitions \cite{GRCS}, Fibonacci words \cite{Goodman}, partitions with constraints \cite{GCEP}, random permutations with symmetries \cite{G11, GGorinRec, GGorinPer, Kerov} and random orders \cite{GO, Jacka}.
Paintbox representations for partitions and ordered partitions invariant under block deletion are found in \cite{GHP, RCS, selfsimilar}.

A parallel and closely related framework of central measures on the path space of a Bratteli diagram emerged in the Vershik--Kerov work on the asymptotic representation theory of the symmetric groups, where a peculiar algebraic technique was developed to identify the indecomposable distributions by a multiplicative property \cite{VK}. For instance, in the scenario iconic for probabilists, the Bratteli diagram is the two-dimensional lattice (Pascal graph), a central measure corresponds to a binary exchangeable sequence, and the multiplicativity translates as independence. The monograph \cite{BO} gives a comprehensive overview of the field.

A very different kind of random symmetry known as spreadability (or contractability) appears as similarity of substructures of the same size. 
In some cases, notably for weak orders \cite{GO, Jacka}, the spreadability fits in the sufficiency framework, but for random sequences or arrays no obvious stratification of the configuration space exists.
Despite this, the Ryll-Nardzewski theorem on infinite sequences and Kallenberg's theorem on partitions of ${\mathbb N}$ establish that for these instances the spreadability is equivalent to exchangeability, see \cite{Kallenberg}.
A similar equivalence with exchangeability was shown by Pitman and Yakubovich \cite{PY} for the class of partially exchangeable stationary (PES) partitions of $\mathbb N$, namely those that are partially exchangeable in the sense of Pitman \cite{PitmanPTRF} and have the additional feature of invariance under shifts.

Beyond the cited asymptotic equivalence, little is known about how spreadability and the PES-symmetry look from the perspective of finite partitions, and how much a given symmetry type differs from exchangeability. For finite structures, both spreadability and stationarity can be realised as invariance under elementary deletion operations, analogous to the projection induced by deletion of the largest element $n$. From a geometric point of view, the distribution of a random partition of $[n]$ is a point in a polytope defined by linear constraints expressing a symmetry property. 
For exchangeable and partially exchangeable partitions, the polytope is a simplex; therefore, the generic distribution admits a unique decomposition over the basic uniform distributions, one for each stratum. In the exchangeable case, a stratum coincides with an orbit of the symmetric group, which is the family of partitions with a fixed multiset of block sizes. In contrast, a non-simplicial geometry would imply that no sufficient statistic exists in principle, in which case it is not possible to characterise a random partition by a distribution on some `shape parameter', hence generating the random object of a given class by a two-step Bayesian procedure of choosing the value of the parameter from some unique distribution, then spreading the probability uniformly or in some other standard way over the stratum associated with the selected value.

In this paper, we revisit the representations of partitions with the described symmetries, adding some results on finite and infinite representations which might be new. In particular, we show that stationarity alone implies the existence of block frequencies, and that the frequency of the first block is a size-biased pick from the whole collection. Corollary~\ref{Cor1} extends a result on single-orbit spreadable sequences~\cite{KallenbergPTRF} (Lemma 2.2) to the alphabet-symmetric case.

In Sections 4 and 6, we look into the geometry of the polytopes. For two-block or $n \leq 4$ partitions, we show that both spreadability and PES-symmetry are equivalent to exchangeability, and that for every $n$ the single-orbit distributions remain indecomposable in a wider symmetry context. The transition occurs at $n=5$, when the polytopes of spreadable and PES partitions are no longer simplices. We find that for $n=5$ the PES-polytope has just three non-exchangeable extremes, while in the spreadable case there is an explosion of complexity with no visible traces of regularity.

Though the exposition is largely self-contained, we refer to \cite{BO}, \cite{Kallenberg} and \cite{CSP} for background. To avoid cumbersome notation, some proofs are carried out via examples. Mathematica software, the Apocrita HPC cluster, and AI tools were utilised to find and verify interesting examples and counter-examples, and to identify the extreme PES partitions for $n=5$.

\section{Basic definitions}
\noindent
A partition $\pi_n$ of the set $[n]:=\{1,\ldots,n\}$ is commonly written as a list of blocks arranged in increasing order of their \emph{starters}, with the elements within each block also arranged in increasing order.
The starter is the least element of the block.
We call the \emph{shape} of the partition, denoted by $s(\pi_n)$, the {\it composition} of $n$ obtained by listing the block sizes in this order;
and the \emph{ranked shape}, denoted $s^\downarrow(\pi_n)$, the {\it partition} of $n$ obtained by  rearranging the block sizes in nonincreasing order.
For instance,
$135 \mid 26 \mid 4 \mid 789$
is a partition of $[9]$ with shape
$(3,2,1,3)$ and ranked shape $(3,3,2,1)$; see Figure~\ref{CanRep} for various representations of this partition
as a table, block-tracking word and a matrix.

With a partition $\pi_n$ we associate its restrictions
$\pi_{n-1},\ldots,\pi_1,\varnothing$ obtained by successively removing the elements $n,\ldots,1$ from their blocks.
Viewed forward in time, the sequence
$\varnothing,\pi_1,\ldots,\pi_n$
represents a growth process in which, at each step, a new element is either added to an existing block or starts a new one.
For instance, in Figure~\ref{subfig:CanPart} 
the element $10$ can be either added to the top of one of the columns, or placed in the bottom row as a new starter.
We further associate with $\pi_n$ its copies on $n$-element subsets
$A\subset\mathbb N$, obtained via the increasing bijection between $[n]$ and $A$.

We may think of partition as allocation of labelled balls in a series of boxes, with understanding that boxes are only distinguishable by their content. 
In the sequential allocation framework a starter is a ball opening an empty box. 

\begin{figure}[htbp]
    \centering
    \begin{subfigure}[b]{0.45\textwidth}
        \centering
        \begin{tikzpicture}[x=0.75cm, y=0.75cm, line width=0.8pt, every node/.style={font=\small}]
            \draw (0,0) rectangle (1,3); \draw (0,1) -- (1,1); \draw (0,2) -- (1,2);
            \node at (0.5,0.5) {\textbf{1}}; \node at (0.5,1.5) {3}; \node at (0.5,2.5) {5};
            \draw (1,0) rectangle (2,2); \draw (1,1) -- (2,1);
            \node at (1.5,0.5) {\textbf{2}}; \node at (1.5,1.5) {6};
            \draw (2,0) rectangle (3,1); \node at (2.5,0.5) {\textbf{4}};
            \draw (3,0) rectangle (4,3); \draw (3,1) -- (4,1); \draw (3,2) -- (4,2);
            \node at (3.5,0.5) {\textbf{7}}; \node at (3.5,1.5) {8}; \node at (3.5,2.5) {9};
        \end{tikzpicture}
        \caption{Canonical representation as a table.}\label{subfig:CanPart}
    \end{subfigure}
    \hfill
    \begin{subfigure}[b]{0.50\textwidth}
        \centering
        \begin{tikzpicture}[x=0.65cm, y=0.65cm, line width=0.8pt, every node/.style={font=\small}]
            \foreach \x in {0,1,...,9} { \draw (\x,0) -- (\x,4); }
            \foreach \y in {0,1,...,4} { \draw (0,\y) -- (9,\y); }
            \fill (0.5, 0.5) circle (0.15); \fill (1.5, 1.5) circle (0.15); 
            \fill (2.5, 0.5) circle (0.15); \fill (3.5, 2.5) circle (0.15); 
            \fill (4.5, 0.5) circle (0.15); \fill (5.5, 1.5) circle (0.15); 
            \fill (6.5, 3.5) circle (0.15); \fill (7.5, 3.5) circle (0.15); 
            \fill (8.5, 3.5) circle (0.15); 
            \node[below] at (0.5, -0.1) {\textbf{1}}; \node[below] at (1.5, -0.1) {\textbf{2}};
            \node[below] at (2.5, -0.1) {\textbf{1}}; \node[below] at (3.5, -0.1) {\textbf{3}};
            \node[below] at (4.5, -0.1) {\textbf{1}}; \node[below] at (5.5, -0.1) {\textbf{2}};
            \node[below] at (6.5, -0.1) {\textbf{4}}; \node[below] at (7.5, -0.1) {\textbf{4}};
            \node[below] at (8.5, -0.1) {\textbf{4}};
        \end{tikzpicture}
        \caption{Block-tracking word.}\label{subfig:GridPart}
    \end{subfigure}

    \vspace{2.5em} 

    \begin{subfigure}[b]{\textwidth}
        \centering
        \scalebox{0.8}{
            $\begin{pmatrix}
                1 & 0 & 1 & 0 & 1 & 0 & 0 & 0 & 0 \\
                0 & 1 & 0 & 0 & 0 & 1 & 0 & 0 & 0 \\
                1 & 0 & 1 & 0 & 1 & 0 & 0 & 0 & 0 \\
                0 & 0 & 0 & 1 & 0 & 0 & 0 & 0 & 0 \\
                1 & 0 & 1 & 0 & 1 & 0 & 0 & 0 & 0 \\
                0 & 1 & 0 & 0 & 0 & 1 & 0 & 0 & 0 \\
                0 & 0 & 0 & 0 & 0 & 0 & 1 & 1 & 1 \\
                0 & 0 & 0 & 0 & 0 & 0 & 1 & 1 & 1 \\
                0 & 0 & 0 & 0 & 0 & 0 & 1 & 1 & 1
            \end{pmatrix}$
        }
        \caption{The equivalence relation matrix.}\label{subfig:MatrixRep}
    \end{subfigure}

    \vspace{1.5em}
    \caption{Representations of the partition of $[9]$ with shape $(3,2,1,3)$ and block starters 1,2,4,7.}
    \label{CanRep}
\end{figure}

For $j\le n$, let $D_j$ denote the operator acting on $\pi_n$ by deleting the element $j$ and then relabelling the remaining elements by $[n-1]$.
Note that $D_n$ coincides with the canonical restriction:
\[
D_n\pi_n=\pi_{n-1}.
\]
This operation is relatively simple, compared with $D_j$ for $j<n$, which may cause some reshuffling of the ordering of blocks by starters.
In particular, $D_1$ amounts to deleting ball $1$, moving 
 box  with ball $2$ to the front, and finally
re-labelling each remaining ball by the next smaller integer: for instance,
\[
D_1(135 \mid 26 \mid 4 \mid 789)=15 \mid 24 \mid 3 \mid 678,
\]
where the block $15$ is the image of $26$.

Denote by $\mathcal P_n$ the set of partitions of $[n]$, and let $\mathcal P$ be the projective limit of the system $(\mathcal P_n,D_n)$.
Thus $\pi\in\mathcal P$ is a partition of $\mathbb N$, identified with the consistent sequence of its restrictions
$\pi=(\pi_n)$ to finite sets.
It is natural to view $\mathcal P$ as an infinite rooted tree with root $\varnothing$, vertices at level $n$ labeled by $\mathcal P_n$,
and branching dictated by the relation $D_n\pi_n=\pi_{n-1}$, which determines the possible extensions of $\pi_{n-1}$ to partitions of $[n]$.
In this picture, $\pi$ is identified with an infinite rooted path which at each level moves to an adjacent vertex farther from the root.

The symmetric group ${\mathfrak S}_n$ acts on ${\mathcal P}_n$ by re-labelling the balls.
Identifying ${\mathfrak S}_n$ with the subgroup of bijections $\sigma:\mathbb N\to\mathbb N$ satisfying $\sigma(i)=i$ for all $i>n$, we obtain an action of the infinite symmetric group
${\mathfrak S}_\infty=\bigcup_{n\ge1}{\mathfrak S}_n$
on $\mathcal P$.

A random partition $\Pi_n$ of $[n]$ is a random variable with values in the
finite set $\mathcal P_n$.
The space of distributions $P$ on $\mathcal P_n$, seen as vectors of point masses $(P(\pi_n), ~\pi_n\in{\mathcal P}_n)$,     is a finite-dimensional simplex
whose extreme points are the delta measures assigning full mass to individual
partitions of $[n]$.
Passing to the restriction $\Pi_{n-1}=D_n\Pi_n$, we canonically associate with
$\Pi_n$ random partitions of smaller size, down to $\Pi_0=\varnothing$.

A random partition $\Pi=(\Pi_n)$ of $\mathbb N$ is a random variable with
values in the projective limit space  $\mathcal P$.
The Borel sigma-algebra on $\mathcal P$ is inherited from the discrete product topology.
The space of distributions on $\mathcal P$ is an infinite-dimensional simplex,
compact in the product topology, where convergence amounts 
to the marginal convergence in distribution of each $\Pi_n$.

\section{Partial exchangeability}\label{PE}

\noindent
Following  \cite{PitmanPTRF},
a  finite random partition $\Pi_n$ is {\it partially exchangeable} if its distribution
is conditionally uniform given the shape $s(\Pi_n)$.
That is, there exists a {\it probability function} $p$ on the set of compositions of $n$
such that
$$
{\mathbb P}[\Pi_n=\pi_n]=p(s(\pi_n)), \qquad \pi_n\in{\mathcal P}_n.
$$
It should be noted that the value, say $p(3,2,1,3)$, is not the probability that
the shape $(3,2,1,3)$ occurs, but rather the probability of any particular partition $\pi_n\in{\mathcal P}_n$
with this shape.
For a composition $\lambda=(\lambda_1,\ldots,\lambda_k)$ of $n$, 

\begin{equation}\label{Pp}
{\mathbb P}[s(\Pi_n)=\lambda] =d(\lambda)p(\lambda),
\end{equation}
where the combinatorial factor (sometimes called the dimension) counts the number of partitions of $[n]$ with shape $\lambda$:
\begin{equation}\label{dim-pe}
d(\lambda)
=
\prod_{j=1}^{k-1}
{\Lambda_j-1\choose\lambda_j-1},
\qquad
\Lambda_j:=\lambda_j+\cdots+\lambda_k.
\end{equation}

Although expanding (\ref{dim-pe}) leads to a massive cancellation of factors, this form is best suitable 
to describe the random partition associated with the {\it elementary} probability function $p_\lambda$, characterised by
\[
d(\lambda)p_\lambda(\lambda)=1,
\]
which corresponds to the uniform distribution over partitions of $[n]$ with shape $\lambda$.
To that end, it will be convenient to view a partition of $[n]$ as an allocation of labelled balls into a series of boxes $1,\ldots,k$ with fixed capacities $\lambda_1,\ldots,\lambda_k$,
where box $i$ is subdivided into $\lambda_i$ cells, as represented by columns in Figure \ref{subfig:CanPart}.
There are two equivalent constructions of the random partition under $p_\lambda$:
\begin{enumerate}
\item[(i)] {\it Box by box.}
Ball $1$ opens the first box, and then $\lambda_1-1$ of the remaining balls are chosen uniformly at random and placed into the vacant cells of that box.
Among the $\Lambda_2$ balls remaining, the one with the least label opens the next box, and then another $\lambda_2-1$ balls are chosen uniformly at random and placed into that box, and so on.

\item[(ii)] {\it Ball by ball.}
For $0\le m<n$, after allocating the first $m$ balls, suppose the currently open boxes contain altogether $r$ vacant cells.
Then ball $m+1$ fills one of these cells, chosen uniformly at random, with probability $r/(n-m)$;
otherwise, with probability $(n-m-r)/(n-m)$, it opens the next box.
\end{enumerate}

Considerably more insight is gained by realising partially exchangeable partitions
within the richer structure of random permutations, whose distribution is
conditionally uniform given certain multivariate statistic, which we choose here to be 
 the {\it set of lower record values}.
For this purpose we write a permutation in the one-line notation
$\tau(1)\tau(2)\cdots\tau(n)$,
interpreted as  {\it ranking} of the set of $n$ balls by the integers $1,\ldots,n$.
A pair $(i,\tau(i))$ is a lower record (with record value $\tau(i)$) if
$\tau(i)<\tau(j)$ for $1\le j<i$.

Fix a composition
\[
\lambda=(\lambda_1,\ldots,\lambda_k),
\qquad
\Lambda_j:=\lambda_j+\cdots+\lambda_k,
\qquad
\Lambda_{k+1}:=0,
\]
and consider a random permutation of $[n]$ with uniform distribution over the set
of permutations whose lower record values are fixed at
\[
\Lambda_2+1>\Lambda_3+1>\cdots>\Lambda_{k+1}+1.
\]
Define block $j$ to be the set of balls $i$ whose rank satisfies
\[
\Lambda_{j+1}<\tau(i)\le \Lambda_j,
\qquad j=1,\ldots,k.
\]
The resulting ordered partition of the set of balls has probability function $p_\lambda$.
The corresponding distribution of the random permutation differs from \eqref{Pp}
only by an inessential combinatorial factor, namely the number of ways to order
the nonstarter balls within each box.
The starters (record positions in permutation) form a random sequence
\[
1=\rho_1<\rho_2<\cdots<\rho_k\le n,
\]
where $\rho_j$ is characterised by the condition
\[
\tau(\rho_j)=\Lambda_{j+1}+1.
\]

By this realisation the distribution of the starters admits a simple closed form.
For
\[
1=r_1<r_2<\cdots<r_k\le n,
\]
under $p_\lambda$
\begin{equation}\label{RecDis}
\mathbb P[\rho_1=r_1,\ldots,\rho_k=r_k]
=
\lambda_1
\prod_{j=2}^{k}
\frac{\lambda_j(r_j-r_{j-1}-1)!}
     {(\Lambda_j-r_j+r_{j-1})_{\,r_j-r_{j-1}}},
\end{equation}
where
$(a)_m=a(a+1)\cdots(a+m-1)$
denotes the rising factorial.
Indeed, for each $j\ge2$, the balls
$i\in[r_{j-1}+1,r_j)$
must have ranks avoiding the interval
$(\Lambda_{j+1},\Lambda_j]$,
while ball $r_j$ has rank $\Lambda_{j+1}+1$, the next lower record.

By a result of Nacu \cite{Nacu}, formula \eqref{RecDis} extends by linearity to 
bijection between the simplex of partially exchangeable partitions of $[n]$
and the simplex of distributions of the starters. Note, however, that while
the distribution on shapes may be arbitrary, this is not so for the starters:
for instance, delta measures are excluded.

In relation to the group action, partial exchangeability amounts to invariance under re-labellings that do not alter the shape. More precisely,
\[
{\mathbb P }[\Pi_n=\sigma\pi_n]={\mathbb P }[\Pi_n=\pi_n], \qquad \sigma\in{\mathfrak S}_n,
\]
provided
$s(\sigma\pi_n)=s(\pi_n).$
Here $\sigma$ and $\pi_n$ are, in general, not independent: a permutation $\sigma$ preserves shape only for some partitions, and for a given $\pi_n$ there is a subgroup of permutations with this property.
In terms of the representations in Figure \ref{CanRep}, always admissible are permutations of columns that do not alter the (strict, upper) record values $1,2,\ldots$ in the block-tracking sequence.
Among other possibilities are the adjacent transpositions $(a, a+1)$ of starters whose boxes contain equal number of balls.


A random infinite partition $\Pi=(\Pi_n)$ is partially exchangeable if every
$\Pi_n$ is partially exchangeable.
This definition is consistent because partial exchangeability is a
{\it hereditary} property: if $\Pi_n$ is partially exchangeable, then its
restriction $\Pi_{n-1}=D_n\Pi_n$ is partially exchangeable as well.
The reason is combinatorial: if two partitions have the same shape $\lambda$,
then for every composition $\mu$ on a higher level both have the same number
$d(\lambda,\mu)$ of extensions with shape $\mu$.
Algebraically, this turns the consistency of distributions into the linear
recursion on the probability function,
\begin{equation}\label{p-Rec}
p(\lambda)=\sum_\mu d(\lambda,\mu)p(\mu),
\end{equation}
where the summation runs over compositions $\mu$ on any fixed level higher
than that of $\lambda$.
The set of (nonnegative) solutions to (\ref{p-Rec}) satisfying the 
 total probability condition $p(\varnothing)=1$ is an infinite-dimensional simplex,
where each element has a unique representation as a convex mixture of extreme points.

In the infinite system (\ref{p-Rec}) it is sufficient to leave only equations with the next level expansion.
Then, for $\lambda=(\lambda_1,\ldots,\lambda_k)$ a composition of $n$ and $\mu$ a
composition of $n+1$, the only nonzero coefficients are $d(\lambda,\mu)=1$
if $\mu_i=\lambda_i+1$ for some $i\le k$, or $\mu_{k+1}=1$, with all other
parts in $\mu$ same as in $\lambda$.
Such a single extension has a natural interpretation in terms of ranking permutations:
a new ball $n+1$ is appended to the string $\tau(1)\cdots\tau(n)$ with an arbitrary rank 
$1\leq\tau(n+1)\leq n+1$, and the
first $n$ balls are re-ranked accordingly. For the updates of the set of lower record values
this induces the usual interlacing pattern.

The following process constructs an infinite partially exchangeable partition
from a randomly marked sequence of balls $1,2,\ldots$; the starters receive special marks, and the other balls will be marked in i.i.d. fashion.
Let
\[
1\ge L_1\ge L_2\ge\cdots\ge0
\]
be an arbitrary infinite nonincreasing random sequence, independent of another
sequence $U_1,U_2,\ldots$ of i.i.d.\ $[0,1]$-uniform random variables.
Conditionally on $L_1,L_2,\ldots$ we define a new sequence
$V_1,V_2,\ldots$ combining both sequences.
At step $1$ we set $V_1=L_1$.
Inductively, if at step $n>1$ we have
\[
\min(V_1,\ldots,V_{n-1})=L_k,
\]
then we set
\[
V_n=
\begin{cases}
U_n, &\text{if } U_n>L_k,\\
L_{k+1}, &\text{if } U_n\le L_k,
\end{cases}
\]
with mark $U_n$ discarded in the second case.
Eventually the whole $L$-sequence will be included in the $V$-sequence, with
the exception of the terminal zeroes that appear in case $L_k=0$ for some $k>1$.
From this,
the partition  $\Pi$ is constructed by letting $n$ be a starter if
$V_n=L_k$ for some $k\ge1$, and letting each $m>n$ belong to the block of $n$
if
\[L_{k-1}>V_m>L_k .\]
Then
\[F_k:=L_{k-1}-L_k,\qquad L_0:=1,\]
is the asymptotic frequency of the block corresponding to $L_k$, which is the block of the $k$th starter.

Observe that $L_1,L_2,\ldots$ appear as (weak, i.e. with ties possible) lower record values in the
$V$-sequence, and that each $L_k>0$ replaces one element of the $U$-sequence.
Thus conditionally on $L_1,L_2,\ldots$ the $V$-sequence may be regarded as an
i.i.d.\ uniform sequence conditioned on the values of lower records.
Straight from the construction, the probability function corresponding to the
choice of distribution of $L_1,L_2,\ldots$ is
\begin{equation}\label{PErep}
p(\lambda_1,\ldots,\lambda_k)
=
{\mathbb E}\prod_{i=1}^k
L_{i-1}\,F_i^{\,\lambda_i-1},
\end{equation}
and it is readily checked that this $p$ indeed satisfies \eqref{p-Rec}.

The extreme solutions to (\ref{p-Rec}) correspond  to deterministic $L$-sequences.
A good exercise in the extreme case is to scrutinise the consequences of any of the relations
\begin{enumerate}
\item[(a)] $L_1=1$,
\item[(b)] $L_1=0$,
\item[(c)] $L_k=L_{k+1}>0$ for some $k>0$,
\item[(d)] $1>L_1>L_2>\cdots>L_k=0$,
\item[(e)] $\lim_{k\to\infty}L_k>0$, with the subcase that the sequence is eventually constant.
\end{enumerate}

The main structural result on infinite partially exchangeable partitions 
asserts that \eqref{PErep} sets up a bijection between probability functions
and distributions of $(L_1,L_2,\ldots)$. The first proof, due to Pitman \cite{PitmanPTRF},
used de Finetti--type arguments based on the exchangeability of non-starters.
Kerov \cite{Kerov} employed 
an explicit formula for the kernel
$d(\lambda,\mu)/d(\mu)$ to identify all limit points of the set of elementary
probability functions (a Martin boundary), and then applied the seminal
Kerov--Vershik `Ring Theorem' to identify these limits with the extreme
solutions to \eqref{p-Rec}.
Interpretations in terms of records and permutations, as well as connections
with infinite graphs, appeared in \cite{Kerov} and were further developed for
symmetries most closely related to partial exchangeability in
\cite{GCEP, G11}.

\section{Exchangeability} 

\noindent
A partially exchangeable random partition $\Pi_n$ of $[n]$ is {\it exchangeable}
if its probability function $p$ is symmetric, so the distribution factors through the ranked shape as
\begin{equation}\label{Fin-Ex}
{\mathbb P}[\Pi_n=\pi_n]
=
p\!\left(s^\downarrow(\pi_n)\right),
\qquad \pi_n\in{\mathcal P}_n.
\end{equation}
Equivalently,
\begin{enumerate}
\item[(i)] $\Pi_n\stackrel{d}{=}\sigma\Pi_n,\qquad \sigma\in{\mathfrak S}_n$;
\item[(ii)] for every partition $\lambda$ of $n$, conditionally on
$s^\downarrow(\Pi_n)=\lambda$, the distribution of $\Pi_n$ is uniform over
the set of partitions of $[n]$ with ranked shape $\lambda$.
\end{enumerate}
Property (i)  expresses invariance under the action of ${\mathfrak S}_n$, and is the most common definition of exchangeable partition of $[n]$.
Since adjacent transpositions  $(a, a+1)$ generate the whole symmetric group, invariance under
swapping two elements already implies (i). Swapping a starter with any other element does not change the distribution.
Property (ii) identifies the ranked shape as a sufficient statistic whose values
enumerate  the orbits of
this group action. For instance, the uniform distribution on $\mathcal P_n$ is exchangeable with constant probability function 
\begin{equation}\label{Bell}
\widehat{p}(\lambda):=\frac{1}{B_n}
\end{equation}
 on compositions of $n$, 
where $B_n$ is the Bell number.

Compared with partial exchangeability,
 the difference is that the order in which the boxes are opened is no longer tied to the occupancy counts.
To highlight this distinction, we invoke
two sequential constructions of the {\it elementary symmetric} probability function $p_\lambda^*$, corresponding to the 
uniform distribution over partitions of $[n]$ with
 ranked shape $\lambda$.
 Consider a series of boxes $1,\ldots,k$ with positive capacities $\lambda_1\geq \cdots\geq \lambda_k$, where
$\lambda_1+\cdots+\lambda_k=n$.
\begin{enumerate}

\item[(i)] {\it Ball by ball.}
At stage $m$, ball $m$ is placed uniformly at random into one of the remaining vacant cells.
\item[(ii)] {\it Box by box.}
Ball $1$ chooses a cell uniformly at random, thereby opening some random box $\tau(1)$ containing that cell, and then $\lambda_{\tau(1)}-1$ of the remaining balls are chosen uniformly at random and placed into that box.
Then, among the $n-\lambda_{\tau(1)}$ balls remaining, the one with the least label chooses one of the remaining vacant cells uniformly at random, thereby opening the next box $\tau(2)$, and another $\lambda_{\tau(2)}-1$ of the remaining balls are chosen uniformly at random and placed into that box, and so on.
\end{enumerate}
The random arrangement of boxes $\tau(1),\ldots, \tau(k)$ emerging this way is a {\it size-biased} permutation $\tau$ of $[k]$ directed by $\lambda$, in particular ${\mathbb P}[\tau(1)=i]=\lambda_i/n$.  For the identity permutation 
$${\mathbb P}[\tau={\rm id}]=\prod_{j=1}^k \frac{\lambda_j}{\Lambda_j},~~~\quad\quad~~~\Lambda_j=\lambda_j+\cdots+\lambda_k,$$
and the general formula is obtained  from this   by relabelling the boxes.  
The size-biased invariance property of this distribution means that  $\tau'\tau \stackrel{d}{=}\tau$ for independent replica $\tau'$ of $\tau$.
The arrangement of block sizes by increase of the starters, $\tau\lambda:=(\lambda_{\tau(1)},\ldots,\lambda_{\tau(k)})$, has therefore the property of invariance under the size-biased permutation, that is
$\tau'\tau\lambda\stackrel{d}{=}\tau\lambda$.

Conditionally on $\tau$  the allocation follows the elementary probability function $p_{\tau\lambda}$, with shape $\tau\lambda$
Now, understanding $\tau$ as the generic value of this random permutation, we obtain
a decomposition
\begin{equation}\label{symmetris}
p_\lambda^*=\sum_{\tau\in{\mathfrak S}_k} w(\tau)p_{\tau\lambda},
\end{equation}
where the weights comprise the  distribution of the size-biased paermutation. 
The number of distinct terms in the symmetrisation (\ref{symmetris}) depends on repetitions in the multiset $\{\lambda_1,\ldots,\lambda_k\}$.
 In particular, if (and only if) all parts of $\lambda$ are equal, the distribution of $\tau$ is uniform,  $w(\tau)\equiv 1/k!$, and $p_\lambda$ is itself symmetric.

Recall that every permutation $\rho\in{\mathfrak S}_n$  admits a unique decomposition in disjoint cycles.
Relabelling the elements of $\rho$ by another permutation $\sigma\in{\mathfrak S}_n$ amounts to conjugation $\sigma\rho\sigma^{-1}$ which preserves the ordered shape 
of the partition in cycles. Therefore a distribution on ${\mathfrak S}_n$ invariant under conjugations yields an exchangeable partition. Conversely, given exchangeable $\Pi_n$,
such a random permutation emerges by arranging balls within boxes in independent uniformly random cyclic orders.
To illustrate the connection, the distribution on ${\mathfrak S}_n$ assigning probability $\theta^{k}/(\theta)_n$ to every permutation $\rho$ with $k$ cycles corresponds to the 
seminal Ewens distribution on ${\mathcal P}_n$ with parameter $\theta\in[0,\infty]$.
The projective limit of this measure-preserving actions of $\mathfrak S_n$ on  ` $\mathfrak S_n$ as enriched $\mathcal P_n$'  plays an important role 
in the representation theory of the  infinite symmetric group \cite{BO}.

An infinite partition $\Pi=(\Pi_n)$ is exchangeable if \eqref{Fin-Ex} holds
for every $n$, equivalently if
$\Pi\stackrel{d}{=}\sigma\Pi$ for every $\sigma\in{\mathfrak S}_\infty$.
Since exchangeability implies partial exchangeability,
\eqref{PErep} remains valid, although the sequence $(L_k)$ can no longer have
an arbitrary distribution.
The representation is more naturally restated in terms of the sequence of  nonnegative block
frequencies
\[
F_k=L_{k-1}-L_k, ~k\geq 1,\qquad F_0:=1-\sum_{k=1}^\infty F_k,
\]
which satisfies the additional condition of invariance under size-biased
permutation.

In Kingman's representation via
 the ranked sequence 
\[
F_1^\downarrow\ge F_2^\downarrow\ge\cdots,
\]
the product in \eqref{PErep} is symmetrised, and
the resulting 
 infinite series is the monomial symmetric function  in the variables
$(F_k^\downarrow)$,
extended to include the possibility $F_0>0$.
The random variable $F_k^\downarrow$ appears as the limiting frequency of the
$k$th largest block of $\Pi_n$.
The extreme exchangeable distributions on ${\mathcal P}$ correspond to
deterministic sequences $(F_k^\downarrow)$.
It is a further instructive exercise to identify how conditions
(a)--(e) from Section~\ref{PE} appear in the exchangeable case.

A more direct marking construction involves an {\it exchangeable sequence} of random variables $\xi_1,\xi_2,\ldots$ and identifies the partition with the classes of the random equivalence relation
$$i\sim j\quad{\rm iff}\quad \xi_i=\xi_j.$$
A natural (though not very common)  possibility is to sample the $\xi_i$'s from the 
{\it unordered frequency distribution}
\begin{eqnarray}\label{nu}
\nu&:=&\sum_{i=1}^\infty F_i\,\delta_{F_i}+ \left(1-\sum_{i=1}^\infty {F_i}\right)\delta_0 \\ \nonumber
&=&\sum_{i=1}^\infty F_i^\downarrow \,\delta_{F_i^\downarrow}+\left(1-\sum_{i=1}^\infty {F_i^\downarrow}\right)\delta_0,
\end{eqnarray}
which is a  discrete random measure
representing the frequencies in a symmetric way.  
The mean measure of $\nu$ yields the distribution of $F_1$ via
$${\mathbb P}[F_1\leq x]={\mathbb E}[\nu[0,x]], ~~x\in[0,1].$$
In Proposition \ref{StaPr} we will show, only using stationarity, that $\nu$ emerges as the limit of its finite analogues for $\Pi_n$.
The $\delta_0$-term is nontrivial in the presence of singleton blocks of  $\Pi$; then $F_k=0$ with positive probability for every $k\geq 1$.
The extreme exchangeable partitions correspond to nonrandom $\nu$, which makes (\ref{nu}) -- viewed as 
a random element of  the space of measures on $[0,1]$ with the topology of  
weak convergence -- an appealing alternative to Kingman's representation via $(F_k^\downarrow)$.

Many interesting  exchangeable partitions of $[n]$ cannot be extended in a consistent way to larger ground sets, and degenerate as $n$ grows.
Thus, the uniform distribution (\ref{Bell}) converges to the singleton partition, as the binary clustering probability vanishes,
$p(2) = B_{n-1}/B_n \sim \log n / n$; also the largest block size is of the order $2\log n$ 
\cite{Sachkov}, rather than grows about linearly in $n$ as occurs for marginals $\Pi_n$ of infinite exchangeable partitions.

Extreme infinite exchangeable partitions can be characterised by the property of {\it dissociation} \cite{Kallenberg}, meaning that restricted partitions on disjoint sets of balls are independent.
This fails for  every nontrivial finite $p_\lambda^*$.

\section{Spreadability}
\noindent
An infinite random partition $\Pi=(\Pi_n)$ is {\it spreadable} if
$
\Pi\stackrel{d}{=}\Pi_{|A}
$
for every infinite subset $A\subset{\mathbb N}$,
where partitions of different sets are identified via the increasing
bijection.
By a theorem of Kallenberg (\cite{Kallenberg}, Corollary 7.41)
\[
\Pi \text{  spreadable}
\quad\Longleftrightarrow\quad
\Pi \text{  exchangeable},
\]
where $\Leftarrow$ is immediate.

Let $D_j$ act on an arbitrary set of balls $A\subset {\mathbb N}$ by deleting ball with the $j$th smallest label  (if
the cardinality of $A$ is at least $j$).
Spreadability is clearly equivalent to invariance under deletion of a single
ball with arbitrary label:
\[
\Pi\stackrel{d}{=}D_j\Pi,
\qquad j\in{\mathbb N}.
\]

In this form the property extends naturally to finite partitions.
We call partition $\Pi_n$ spreadable if
\begin{equation}\label{FinSpr}
D_1\Pi_n \stackrel d= D_2\Pi_n \stackrel d=
\cdots \stackrel d= D_n\Pi_n .
\end{equation}
This property is the appropriate finite counterpart of the infinite spreadability,  because it is hereditary and if it holds   for
every marginal partition of $\Pi=(\Pi_n)$  then $\Pi$ is spreadable.

Write for the distribution
\[
P(\pi_n):={\mathbb P}[\Pi_n=\pi_n],
\qquad \pi_n\in{\mathcal P}_n,
\]
and extend this by the addition rule to partitions of smaller sizes.
Then $\Pi_n$ is spreadable if and only if the following   `spreadability rule' holds:
\begin{equation}\label{SprRule}
\sum_{\pi_n:\,D_i\pi_n=\pi_{n-1}} P(\pi_n)
=
\sum_{\pi_n:\,D_n\pi_n=\pi_{n-1}} P(\pi_n),
\qquad
\pi_{n-1}\in{\mathcal P}_{n-1},\;\;1\le i\le n-1 .
\end{equation}
Thus spreadability for fixed $n$  amounts to a system of linear constraints within  the simplex of
all probability distributions on ${\mathcal P}_n$.

It is immediate that
\[
\Pi_n \text{ exchangeable}
\quad\Longrightarrow\quad
\Pi_n \text{ spreadable}.
\]
The converse fails in general, as the following example shows.
\vskip0.2cm

\begin{example}\label{Ex1}
{\rm 
(A $n=5$ non-exchangeable, spreadable partition.)
Consider a random partition $\Pi_5$ on $\mathcal{P}_5$ with distribution
\[
\begin{minipage}[t]{0.56\textwidth}
\centering
\renewcommand{\arraystretch}{1.15}
\begin{tabular}{l|c}
$\pi_5$ & $P(\pi_5)$ \\
\hline
$12|3|4|5$ & 2 \\
$1|23|4|5$ & 2 \\
$1|2|34|5$ & 2 \\
$1|2|3|45$ & 2 \\
$15|2|3|4$ & 2 \\[1mm]
$13|24|5$ & 1 \\
$13|25|4$ & 1 \\
$14|25|3$ & 1 \\
$14|2|35$ & 1 \\
$1|24|35$ & 1 \\
\hline
scale & 15
\end{tabular}
\end{minipage}
\hfill
\begin{minipage}[t]{0.38\textwidth}
\centering
\renewcommand{\arraystretch}{1.15}
\begin{tabular}{l|c}
$\pi_4$ & $P(\pi_4)$ \\
\hline
$12|3|4$ & 2 \\
$1|23|4$ & 2 \\
$1|2|34$ & 2 \\
$14|2|3$ & 2 \\
$1|24|3$ & 1 \\
$13|2|4$ & 1 \\[1mm]
$13|24$ & 1 \\[1mm]
$1|2|3|4$ & 4 \\
\hline
scale & 15
\end{tabular}
\end{minipage}
\]

This distribution is not exchangeable. For example,
\[
P(12|3|4|5)=\frac{2}{15},
\qquad
P(13|2|4|5)=0.
\]
The spreadability is shown directly, by
 checking that the deletion pushforwards $D_j\Pi_5,$ for $1\leq j\leq5,$
all coincide with the common marginal distribution $\Pi_4$
displayed in the right-hand table. 
}
\end{example}


\vskip0.2cm
The example begs a question of how much the spreadability of finite partitions differs from  exchangeability. Using random marking of the block-tracking sequence 
and the representation through an equivalence relation (Figure~\ref{CanRep}), 
we will employ a connection with random sequences. We call a random sequence $\xi_1,\ldots,\xi_n$ \emph{spreadable} if it satisfies 
\begin{equation}\label{SeqSpr}
D_j (\xi_1, \ldots, \xi_n) \stackrel{d}{=} D_n (\xi_1, \ldots, \xi_n), \qquad j = 1, \ldots, n,
\end{equation}
where $D_j$ acts on the sequence by deleting the coordinate $\xi_j$ and keeping the remaining elements intact.
Equivalently,  spreadability means that all $(n-1)$-dimensional marginal distributions are identical.

\begin{proposition}\label{Prs} 
A partition $\Pi_n$ is spreadable if and only if it can be realised as a partition into classes of a random 
equivalence relation, $i\sim j \Leftrightarrow \xi_i=\xi_j$, where $\xi_1,\ldots,\xi_n$ is a spreadable sequence.
\end{proposition}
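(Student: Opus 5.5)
Two directions. For the "if" direction, suppose $\xi_1,\ldots,\xi_n$ is spreadable and $\Pi_n$ is the partition of $[n]$ into the level sets of $\xi$. Write $\Phi$ for the map taking a sequence (indexed by an ordered set of positions) to the partition of its index set into classes $\{i:\xi_i=x\}$, relabelled increasingly to $[m]$. The key observation is that $\Phi$ commutes with deletion: deleting coordinate $j$ from the sequence and then forming the equivalence partition gives the same result as forming the partition and then applying $D_j$. Indeed, the restriction of the relation $\xi_i=\xi_{i'}$ to $[n]\setminus\{j\}$ is the equivalence relation of the shortened sequence, and the increasing relabelling is the same on both sides. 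Hence
\[
D_j\Pi_n=\Phi(D_j\xi)\stackrel d=\Phi(D_n\xi)=D_n\Pi_n ,
\]
since equality in distribution is preserved under a measurable map. This is exactly (\ref{FinSpr}).

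For the "only if" direction, given a spreadable $\Pi_n$, I would take the block-tracking word as the canonical marking: $\xi_i:=k$ if ball $i$ lies in the $k$th block in order of starters. Then $\Pi_n=\Phi(\xi)$, but $\xi$ itself need not be spreadable. Deleting a starter shifts the labels of later blocks, so $D_j\xi$ is not the block-tracking word of $D_j\Pi_n$. To repair this, I would replace the labels by exchangeable, label-free marks. Conditionally on $\Pi_n$, let $\eta_1,\ldots,\eta_n$ be i.i.d.\ uniform on $[0,1]$, independent of $\Pi_n$, and set
\[
\xi_i:=\eta_{\text{starter of the block of } i}.
\]
Almost surely the block values are distinct, so the equivalence classes of $\xi$ are exactly the blocks of $\Pi_n$. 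Equivalently, $\xi_i=U_{B(i)}$, where the $U_B$ are i.i.d.\ uniforms indexed by blocks.

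The main step is to verify that this $\xi$ is spreadable. The plan is to show that the law of $\xi$ is a fixed measurable functional of the law of $\Pi_n$ and is compatible with deletion: for any $m$-element partition $\pi$, the sequence $\xi$ built from $\pi$ with fresh i.i.d.\ block values, after deleting coordinate $j$, has the law of the sequence built from $D_j\pi$ with fresh i.i.d.\ block values. This holds because deleting a ball either removes a ball from a block that survives, whose value is still uniform and independent of the others, or removes a singleton block, whose unused value is discarded. In both cases the surviving block values are i.i.d.\ uniform and attached to the blocks of $D_j\pi$. The choice of starter used for indexing is irrelevant, since only the i.i.d.\ structure over blocks matters. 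Integrating over the law of $\Pi_n$ then gives
\[
D_j\xi \stackrel d= \Psi(D_j\Pi_n)\stackrel d=\Psi(D_n\Pi_n)\stackrel d= D_n\xi ,
\]
where $\Psi$ denotes this randomised marking kernel. This establishes (\ref{SeqSpr}).

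The obstacle I expect is bookkeeping in this kernel-commutation step, namely checking that the conditional law of $D_j\xi$ given $\Pi_n=\pi$ depends only on $D_j\pi$. I would handle it by writing the law explicitly as the pushforward of product Lebesgue measure on $[0,1]^{\text{blocks}}$.

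If one prefers finite alphabets, the same argument goes through with values drawn uniformly without replacement from $[n]$.
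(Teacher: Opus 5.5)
Your proof is correct and follows the paper's route: mark the blocks with tie-free exchangeable (here i.i.d.\ uniform) labels independent of $\Pi_n$, and use the fact that, given the partition, the law of the marked sequence on a subset depends only on the restricted partition. The paper packages that step as Lemma~\ref{repLemma} for arbitrary bijections between subsets. You instead verify directly the special case that is needed, namely that the conditional law of $D_j\xi$ given $\Pi_n=\pi$ is the marking kernel applied to $D_j\pi$.
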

\begin{proof}
The direction $\Leftarrow$ is immediate. 
For the converse, let $\eta_1,\ldots,\eta_n$ be an exchangeable sequence with zero probability of ties and independent of $\Pi_n$;
we may choose this to be a random permutation of $[N]$ (for suitable $N$) or an i.i.d. sample from ${\rm U}[0,1]$.
Given a realisation of $\Pi_n$ with block starters $i_1<\cdots<i_k$, we mark these starters with $\eta_1,\ldots,\eta_k$ respectively, and then assign to each element of a block the same mark as its starter. This defines a sequence of marks $\xi_1,\ldots,\xi_n$. The spreadability of this sequence follows from the next general lemma.
\end{proof}

\begin{lemma} \label{repLemma}
For $A\subset[n]$ and $B\subset[n]$, let $\rho: A\to B$ be a bijection. Suppose the random partition $\Pi_n$ has the property that $\Pi|_B$ follows the same distribution as the partition obtained by transporting $\Pi|_A$ to $B$ via $\rho$. Then 
$$(\xi_i, i\in A) \stackrel{d}{=} (\xi_{\rho(i)}, i\in A).$$
\end{lemma}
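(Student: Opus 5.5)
We would condition on the partition and use the exchangeability of the marks. The construction is as in the proof of Proposition~\ref{Prs}: given $\Pi_n$ with starters $i_1<\cdots<i_k$, the block of the $j$th starter receives mark $\eta_j$. The marks $(\eta_j)$ are exchangeable, have no ties, and are independent of $\Pi_n$.

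\textbf{Step 1 (a fixed partition).} Fix $\pi_n\in\mathcal P_n$. Let $\pi|_A$ have $m$ blocks $C_1,\ldots,C_m$, listed by their starters within $A$. Each $C_l$ is the trace on $A$ of a distinct block of $\pi_n$, namely the one with global index $j_l$, where $j_1,\ldots,j_m$ are distinct. Then, conditionally on $\Pi_n=\pi_n$, the vector $(\xi_i,\,i\in A)$ equals $\eta_{j_l}$ on $C_l$. By exchangeability, the subvector $(\eta_{j_1},\ldots,\eta_{j_m})$ has the same law as $(\eta_1,\ldots,\eta_m)$ for any distinct indices. Hence the conditional law of $(\xi_i,\,i\in A)$ depends only on $\pi|_A$: it is the law of the labelling $f_{\pi|_A}(\eta_1,\ldots,\eta_m)$, where $f_\kappa$ assigns $\eta_l$ to the $l$th block of a partition $\kappa$ of $A$. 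The one point that needs care is that the law of $(\eta_1,\ldots,\eta_m)$ is also invariant under reordering. This matters because a block's index among the starters of $A$ can differ from its index among the starters of $\rho(A)=B$.

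\textbf{Step 2 (the two sides).} We would check the displayed identity coordinate-wise against the functional $g(x)=(x_{\rho(i)},\,i\in A)$. The vector $(\xi_{\rho(i)},\,i\in A)$ is the pullback via $\rho$ of $(\xi_b,\,b\in B)$. Applying Step 1 to $B$, conditionally on $\Pi_n$ this vector is distributed as labelling the pullback partition $\rho^{-1}(\Pi|_B)$ of $A$ by i.i.d.-like distinct marks, taken in some order of the blocks. The ordering of the blocks is immaterial by the exchangeability of $(\eta_1,\ldots,\eta_m)$. So for any partition $\kappa$ of $A$, the conditional law is that of $\kappa$'s blocks receiving $\eta_1,\ldots,\eta_m$ in the canonical order. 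Define $Q_\kappa$ as this common law. Then
\[
\mathcal L(\xi_i,\,i\in A)=\sum_\kappa \mathbb P[\Pi|_A=\kappa]\,Q_\kappa,
\qquad
\mathcal L(\xi_{\rho(i)},\,i\in A)=\sum_\kappa \mathbb P[\rho^{-1}(\Pi|_B)=\kappa]\,Q_\kappa .
\]
The hypothesis says that transporting $\Pi|_A$ to $B$ by $\rho$ gives the law of $\Pi|_B$. Equivalently, $\rho^{-1}(\Pi|_B)\stackrel d=\Pi|_A$. So the two mixtures coincide.

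\textbf{Main obstacle.} The main difficulty is Step 1's claim that the conditional law of the marks on $A$ depends only on the partition of $A$ up to relabelling of blocks. This needs both exchangeability and distinctness of the $\eta$'s. It also needs the observation that distinct blocks of $\Pi|_A$ are traces of distinct global blocks, which holds since blocks are disjoint. Tie-freeness ensures that the equivalence relation recovered from $\xi$ is exactly $\Pi$. Everything else is bookkeeping.
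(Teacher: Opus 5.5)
Your proof is correct and is essentially the paper's argument. The paper only illustrates it on a concrete $\rho$: it conditions on $\Pi|_A$ and on the transported partition of $B$, and invokes exchangeability with reordering, $(v_1,v_2,v_3)\stackrel{d}{=}(\eta_1,\eta_2,\eta_3)\stackrel{d}{=}(w_2,w_1,w_3)$. You write out the general bookkeeping (conditioning on all of $\Pi_n$, distinct global block indices $j_l$, the mixtures over $Q_\kappa$), which makes explicit the step the paper leaves implicit.

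One small remark: tie-freeness of the $\eta$'s plays no role in the lemma itself; only distinctness of the indices $j_l$ does. Tie-freeness is needed only so that the marks recover $\Pi_n$ in Proposition~\ref{Prs}.
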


\begin{proof} 
We illustrate the mechanism by a concrete example. Let $\rho:A\to B$ be given by
$$
\begin{pmatrix}
  2 & 3 & 5 & 6 & 7 & 8 \\
  6 & 3 & 4 & 8 & 10 & 7
\end{pmatrix}.
$$
Conditionally on $\Pi|_A = 258 \mid 37 \mid 6$, the induced marking on $A$ has the form 
\begin{equation}\label{vs}
(\xi_2,\xi_3,\xi_5,\xi_6,\xi_7,\xi_8)=(v_1,v_2,v_1,v_3,v_2,v_1),
\end{equation}
where $v_1, v_2, v_3$ are the marks assigned to the starters of $A$.
The transported partition on $B$ is $3\,10 \mid 467 \mid 8$, which yields the marking sequence:
$$(\xi_3,\xi_4,\xi_6,\xi_7,\xi_{8},\xi_{10})=(w_1,w_2,w_2,w_2,w_3,w_1).$$
Evaluating the sequence on the image points of $\rho$ results in:
\begin{equation}\label{ws}
(\xi_6,\xi_3,\xi_4,\xi_8,\xi_{10},\xi_{7})=(w_2,w_1,w_2,w_3,w_1,w_2).
\end{equation}
The distributions of sequences in (\ref{vs}) and (\ref{ws}) are identical because, by independence and exchangeability of the underlying sequence of marks:
$$
(v_1,v_2,v_3) \stackrel{d}{=} (\eta_1,\eta_2,\eta_3) \quad \text{and} \quad (w_2,w_1,w_3) \stackrel{d}{=} (\eta_1,\eta_2,\eta_3).
$$
\end{proof}
See \cite{Kallenberg} (Lemma 7.39) for a detailed proof of a more general random marking result in the context of infinite symmetries.

A delicate side of the relation between random partitions and sequences is that in the direction `partition $\to$ marking sequence' a special class of sequences of identically distributed random variables 
emerges. We may call a sequence $\xi_1,\ldots,\xi_n$
{\it alphabet-symmetric} if  $(\xi_1,\ldots, \xi_n)\stackrel{d}{=}(\varphi(\xi_1),\ldots, \varphi(\xi_n))$ for every 
 transformation $\varphi$ of the range space which preserves the marginal distribution, so that $\varphi(\xi_1)\stackrel{d}{=}\xi_1$. 
The concept obviously extends to infinite sequences, by requiring the condition to hold for every $n$.

The simplest example of spreadable non-exchangeable sequence is 
the $3$-valued $\xi_1,\xi_2$ with uniform joint distribution over $\{(0,1), (1,2), (2,0)\}$).
Kallenberg (\cite{KallenbergPTRF}, p. 213)  notes that `construction of nontrivial and interesting examples requires both ingenuity and some calculation'.
Two $n=3$ examples presented in \cite{KallenbergPTRF} are three-valued spreadable sequences which, however,  induce simple exchangeable partitions;
and further examples found there (Corollary 2.6) collapse to $n=2$ singleton partitions.
 Example~\ref{Ex1} through  marking the blocks with  uniformly random permutation of $[4]$ yields a more involved non-exchangeable, spreadable (and alphabet-symmetric) sequence $\xi_1,\ldots,\xi_5$
on  a four-letter alphabet.

\begin{proposition}\label{binary}
Every spreadable partition $\Pi_n$ with at most two blocks is exchangeable. 
\end{proposition}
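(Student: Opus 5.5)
The plan is to reduce the statement to a claim about binary sequences, using Proposition~\ref{Prs}, and to settle that claim by an inclusion--exclusion argument. Let $\Pi_n$ be spreadable with at most two blocks. Following the proof of Proposition~\ref{Prs}, I mark the blocks with the values of an exchangeable, tie-free pair $(\eta_1,\eta_2)$ independent of $\Pi_n$. I take this pair to be a uniformly random permutation of $\{0,1\}$: the block of ball $1$ gets mark $\eta_1$ and the second block, if present, gets $\eta_2$.

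This yields a $\{0,1\}$-valued sequence $\xi_1,\ldots,\xi_n$ whose equivalence classes $\{i:\xi_i=\xi_j\}$ are exactly the blocks of $\Pi_n$. By Lemma~\ref{repLemma}, applied to the increasing bijections between $(n-1)$-subsets and $[n-1]$, the sequence is spreadable in the sense of (\ref{SeqSpr}). Spreadability of $\Pi_n$ is hereditary, so Lemma~\ref{repLemma} applies equally to increasing bijections between $k$-subsets for every $k\le n-1$. Hence for every $S\subset[n]$ with $|S|\le n-1$, the vector $(\xi_i, i\in S)$ listed in increasing order has a law depending only on $|S|$.

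The key step is to show that such a binary sequence is exchangeable. Put $m(S):=\mathbb P[\xi_i=1 \text{ for all } i\in S]$. By the previous paragraph, $m(S)=f(|S|)$ for all $|S|\le n-1$, and trivially also for $S=[n]$, since there is only one subset of that size. For a word $w\in\{0,1\}^n$ with set of ones $J$, inclusion--exclusion over the zero positions gives
\[
\mathbb P[\xi=w]=\sum_{T\supseteq J}(-1)^{|T|-|J|}\,m(T)=\sum_{t=|J|}^{n}(-1)^{t-|J|}{n-|J|\choose t-|J|}f(t).
\]
This depends on $w$ only through $|J|$, so $(\xi_1,\ldots,\xi_n)\stackrel{d}{=}(\xi_{\sigma(1)},\ldots,\xi_{\sigma(n)})$ for every $\sigma\in\mathfrak S_n$.

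Finally, $\sigma\Pi_n$ is the partition induced by the permuted sequence, so $\sigma\Pi_n\stackrel{d}{=}\Pi_n$, which is property (i) of exchangeability. The main point needing care is the second step: spreadability is assumed only at level $n$ as equality of the laws of $D_j\Pi_n$, and it must be transported to the marking sequence on all smaller subsets. This requires the hereditary property together with Lemma~\ref{repLemma} applied to increasing bijections between $k$-subsets. The two-value restriction is essential only in the inclusion--exclusion step: with three or more marks, the one-point-set statistics $m(S)$ no longer determine the law, and Example~\ref{Ex1} shows that the conclusion indeed fails.
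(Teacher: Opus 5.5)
Your proof is correct. It shares the first step with the paper: you mark the at most two blocks by a random permutation of $[2]$ and pass to a spreadable binary sequence. The key step is different.

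The paper works only with the top-level relations. For each $w'\in\{0,1\}^{n-1}$ the sum $P(w_{0j})+P(w_{1j})$ does not depend on the insertion position $j$. It then inducts on the number of ones. The all-zero $w'$ makes $P$ constant on words with a single $1$. Constancy on weight $k$ then forces $P(w_{1j})$ to be constant on weight $k+1$, since every such word is some $w_{1j}$.

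You instead first carry spreadability down to all $k$-dimensional marginals via heredity, so that $m(S)=f(|S|)$. You then recover the point masses by M\"obius inversion on the Boolean lattice.

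\textbf{What the paper's route buys.} It needs no heredity and stays inside the linear system (\ref{SprRule}), in the same spirit as the $n=4$ proposition. However, its inductive step leaves implicit a connectivity fact: all words of weight $k+1$ must be linked through common $w'$ (delete a $1$ and reinsert it elsewhere), so that the constants obtained for different $w'$ agree.

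\textbf{What your route buys.} It bypasses that connectivity argument. It also yields the explicit formula $\mathbb P[\xi=w]=\sum_{t}(-1)^{t-|w|}\binom{n-|w|}{t-|w|}f(t)$. This shows that a spreadable binary law is determined by $f(0),\ldots,f(n)$. It also makes clear why two values are special: all cylinder probabilities are combinations of the order-insensitive quantities $m(S)$, which fails with three or more marks.

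\textbf{Your extra ingredient.} You need equality of the lower-dimensional marginals. Heredity of partition spreadability correctly supplies it. Alternatively, it follows directly, because equality of all $(n-1)$-dimensional marginals of a sequence is inherited by the $(n-2)$-dimensional ones, and so on downward.
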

\begin{proof}
We use  marking blocks by permutation of $[2]$
and reduce the claim to sequences, as  in Proposition \ref{Prs}.
 Let $P$ be spreadable distribution on
 binary words $w  \in \{0,1\}^n$. We  show that $P(w)$ depends only on the sum of symbols  $|w|$, which is equivalent to exchangeability.
For a word $w'  \in \{0,1\}^{n-1}$ and   each $j \in \{1, \dots, n\}$, there are  two words $w_{0j}, w_{1j}$ extending  $w'$
by inserting $0$, respectively $1$, in the $j$th position.
Spredability means that for every $w'$  the sum 
$P(w_{0j}) + P(w_{1j})$ does not depend on $j$.
If $w'$ is zero word, then $w_{0j}$ has only zeroes and $|w_{1j}|=1$; therefore $P$ is constant on words with $|w|=1$. By induction, suppose
 $P$ is constant on words with $|w|=k$. Extending $w'$ with $|w'|=k$, we have $|w_{0j}|=k, |w_{1j}|=k+1$.  By the hypothesis $P(w_{0j})$ does not depend on $j$, whence varying 
such $w'$ and $j$ we obtain for  $P(w_{1j})$  the same value. It remains to note that every word $w$ with $|w|=k+1$ can be written as $w = w_{1j}$ for some $w'$ with $|w'|=k$, by deleting a coordinate $j$ such that $w_j = 1$.
\end{proof}

A different argument  for binary sequences is  found in \cite{Kallenberg} (Theorem 1.13 (iii)).  
The exchangeability of marginal partition $\Pi_4$ in Example~\ref{Ex1} suggested the next result, which has no analogue for sequences of length $n=2,3$.

\begin{proposition}\label{n=4}
Every spreadable random partition $\Pi_4$ of $[4]$ is exchangeable.
\end{proposition}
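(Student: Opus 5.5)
The plan is a direct linear-algebra argument on the $15$-dimensional simplex of distributions on $\mathcal P_4$. I will show that the spreadability rule \eqref{SprRule} with $n=4$ forces $P$ to be constant on each $\mathfrak S_4$-orbit. The orbits are indexed by the ranked shapes:
\[
(4),\quad (3,1)\ \text{(4 partitions)},\quad (2,2)\ \text{(3 partitions)},\quad (2,1,1)\ \text{(6 partitions)},\quad (1,1,1,1).
\]
The orbits $(4)$ and $(1,1,1,1)$ are singletons, so nothing needs proving for them. The constraints are indexed by the five $\pi_3\in\mathcal P_3$ and the positions $i=1,2,3$, each compared with $i=4$. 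For each $\pi_3$ and each $i$, I will list the fibre $\{\pi_4: D_i\pi_4=\pi_3\}$ explicitly. A fibre consists of the extensions obtained by inserting a new ball at position $i$, either into one of the existing blocks or as a new singleton. So it has $|\pi_3|+1$ elements.

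\textbf{Step 1: the one-block fibre.} Take $\pi_3=123$. The fibre of $D_i$ is $\{1234\}$ together with the partition in which $i$ is a singleton and the other three balls form a block. Equating these sums over $i$ cancels $P(1234)$. This gives
\[
P(1|234)=P(134|2)=P(124|3)=P(123|4),
\]
so the orbit $(3,1)$ is settled.

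\textbf{Step 2: the all-singleton fibre.} Take $\pi_3=1|2|3$. The fibre of $D_i$ is $1|2|3|4$ together with the three $(2,1,1)$-partitions whose doubleton contains $i$. Writing $a_{jk}$ for the probability of the $(2,1,1)$-partition with doubleton $\{j,k\}$, this yields that
\[
s_i:=\sum_{k\ne i}a_{ik}
\]
does not depend on $i$. These are three independent relations among the six unknowns $a_{jk}$.

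\textbf{Step 3: the two-block fibres.} Take $\pi_3\in\{12|3,\ 13|2,\ 1|23\}$. Each fibre contains one $(3,1)$-partition, one $(2,2)$-partition and one $(2,1,1)$-partition, and the $(3,1)$-terms are already known to be equal by Step 1. For example, with $\pi_3=12|3$ and $i=4$ the fibre is
\[
\{123|4,\ 12|34,\ 12|3|4\};
\]
for $i=1$ it is
\[
\{134|2 \text{ or } 1|234\ \text{(as appropriate)},\ \ldots\}.
\]
I will write all nine resulting equations in the unknowns $b_{12|34},b_{13|24},b_{14|23}$ and $a_{jk}$. I will also use the reversal symmetry $j\mapsto 5-j$. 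Reversal maps $D_i$ to $D_{5-i}$, so it preserves spreadability and preserves each orbit. This lets me pair equations and halve the bookkeeping.

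\textbf{Step 4: solve.} The goal is to show that the solution space of Steps 2--3, restricted to the $(2,2)$ and $(2,1,1)$ coordinates, is exactly two-dimensional: all $b$ equal and all $a$ equal. Combined with Steps 1 and the two trivial orbits, this gives the five free parameters of the exchangeable family, which proves the claim.

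I expect Step 4 to be the main obstacle. The $(2,2)$ and $(2,1,1)$ unknowns are coupled in the two-block fibres, and it is not a priori clear that the system has full rank. Example~\ref{Ex1} shows that the analogous system for $n=5$ is degenerate. My first approach is to subtract pairs of Step 3 equations that share the same $(2,1,1)$-term. This should express differences of $b$'s as differences of $a$'s. Feeding these into the constancy of $s_i$ from Step 2 should then force all differences to vanish. If that elimination stalls, I will simply compute the rank of the $9\times 9$ coefficient matrix directly, which is a routine finite check.
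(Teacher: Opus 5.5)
\textbf{Step 4 cannot be carried out.} The system you set up really is rank-deficient, and the proposition as stated is false.

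Your Step 2 is right: constancy of $s_i$ is equivalent to $a_{12}=a_{34}$, $a_{13}=a_{24}$, $a_{14}=a_{23}$, and to nothing more.

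Two of your fibres are misstated, though the slips are harmless once Step 1 is in hand. For $\pi_3=12|3$ the $D_4$-fibre is $\{124|3,\,12|34,\,12|3|4\}$; the set you wrote is the $D_3$-fibre. The $D_1$-fibre is $\{123|4,\,14|23,\,1|23|4\}$.

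Cancelling the common $(3,1)$-value, the targets $12|3$, $13|2$, $1|23$ give respectively
\[
\begin{aligned}
&b_{12|34}+a_{12}=b_{13|24}+a_{13}=b_{14|23}+a_{23},\\
&b_{13|24}+a_{13}=b_{14|23}+a_{14}=b_{13|24}+a_{24},\\
&b_{14|23}+a_{23}=b_{13|24}+a_{24}=b_{12|34}+a_{34}.
\end{aligned}
\]
Together with Step 2, everything collapses to the single chain $b_{12|34}+a_{12}=b_{13|24}+a_{13}=b_{14|23}+a_{14}$.

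So the solutions on the $(2,2)\cup(2,1,1)$ coordinates form a four-dimensional space. It is parametrised by $a_{12}=a_{34}=x$, $a_{13}=a_{24}=y$, $a_{14}=a_{23}=z$, and $b_{12|34}=t-x$, $b_{13|24}=t-y$, $b_{14|23}=t-z$. The exchangeable solutions are only the two-dimensional slice $x=y=z$. Your $9\times 9$ matrix has rank $5$, not $7$. The spreadable polytope for $n=4$ is $6$-dimensional, whereas the exchangeable simplex is $4$-dimensional.

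\textbf{Explicit counterexample.} Take $x=t=1$, $y=z=0$, and let $\Pi_4$ be uniform on $\{12|3|4,\ 1|2|34,\ 13|24,\ 14|23\}$. The deletions act as
\[
\begin{array}{c|cccc}
 & 12|3|4 & 1|2|34 & 13|24 & 14|23\\
\hline
D_1 & 1|2|3 & 1|23 & 13|2 & 12|3\\
D_2 & 1|2|3 & 1|23 & 12|3 & 13|2\\
D_3 & 12|3 & 1|2|3 & 1|23 & 13|2\\
D_4 & 12|3 & 1|2|3 & 13|2 & 1|23
\end{array}
\]
Each $D_j$ maps the support bijectively onto $\{1|2|3,\,12|3,\,13|2,\,1|23\}$, so all four pushforwards are uniform on that set and $\Pi_4$ is spreadable. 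Yet $P(12|3|4)=1/4\neq 0=P(13|2|4)$ and $P(12|34)=0\neq 1/4=P(13|24)$, so it is not exchangeable.

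\textbf{Comparison with the paper.} The paper's printed proof has exactly the hole you were worried about. Its Step 2 asserts that the $1|2|3$ fibre alone forces all six $(2,1,1)$-probabilities to be equal, whereas your Step 2 shows it only pairs complementary doubletons. Its Step 3 then uses the unjustified equality $P(12|3|4)=P(13|2|4)$ to conclude $P(12|34)=P(13|24)$.

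What the equations actually yield for $n=4$ is the $(3,1)$-orbit equalities, the complementary-pair equalities, and the chain above. That does not amount to exchangeability, so neither elimination nor a rank computation can close Step 4.
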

\begin{proof}
Write $P(\pi):=\mathbb P[\Pi_4=\pi]$ for $\pi\in\mathcal P_4$. We verify
that the system of linear equations defined by \eqref{SprRule} implies that
$P$ is constant on each orbit ${\mathcal O}_\lambda$
of the symmetric group $\mathfrak S_4$.

{\bf 1. $\lambda=(3,1)$.} Consider $\pi_3=123\in\mathcal P_3$. The spreadability
condition \eqref{SprRule} requires that the total  mass projecting from $\mathcal P_4$
to $\pi_3$ is the same for each $D_i$. The partition $1234$ 
maps to $123$ under every $D_i$, contributing an identical $P(1234)$ term which cancels 
out. Looking at the remaining preimages of shape $(3,1)$, 
under $D_4$ the unique preimage is $123|4$. Under $D_3$, ball $3$
is deleted and ball $4$ is relabelled as $3$, so the unique preimage is
$124|3$. Likewise, under $D_2$ and $D_1$, the unique preimages are $134|2$
and $234|1$, respectively. Therefore,  \eqref{SprRule} gives
\begin{equation}\label{eq:shape31}
P(123|4)
=
P(124|3)
=
P(134|2)
=
P(1|234).
\end{equation}

{\bf 2.  $\lambda=(2,1,1)$.}
Consider  $\pi_3=1|2|3$. Applying
\eqref{SprRule} and comparing the four deletion operators yields a linear
system for the six partitions of shape $(2,1,1)$. From this we find 
\[
P(12|3|4)
=
P(13|2|4)
=
P(14|2|3)
=
P(1|23|4)
=
P(1|24|3)
=
P(1|2|34).
\]

{\bf 3. $\lambda=(2,2)$.}
Now consider the target partition $\pi_3=12|3\in\mathcal P_3$. Under $D_4$,
the preimages are $124|3$, $12|34$ and $12|3|4$. Under $D_2$, the preimages are
$123|4$, $13|24$ and $13|2|4$. Thus, using \eqref{eq:shape31} and the equalities above, \eqref{SprRule} gives
$P(12|34)=P(13|24).$
Repeating the same argument with $D_1$ we
obtain
\[
P(12|34)
=
P(13|24)
=
P(14|23).
\]

{\bf 4. $\lambda=(1,1,1,1)$ and $\lambda=(4)$.} These are the  ranked shapes corresponding to single-element orbits.

We see that spreadable $P$ must be constant on every orbit, hence $\Pi_4$ is exchangeable.
\end{proof}

Geometrically,  spreadable distributions on ${\mathcal P}_n$ comprise a polytope, hence it is natural to look for its extreme points. 
The exchangeable distributions comprise a distinguished simplex spanned on partitions with uniform distribution (possessing the probability function $p^*_\lambda$) supported on a single orbit $\mathcal O_\lambda$;
these are the most obvious  candidates to be scrutinised.  

For sequences $\xi_1,\ldots,\xi_n$ with nonrandom empirical measure $\beta =\sum_{j=1}^n\delta_{\xi_j}$ the spreadability implies exchangeability (see \cite{Kallenberg} Lemma 2.2);
but neither the result nor its proof apply to spreadable partitions, since the alphabet-symmetric marking sequence has nonrandom $\beta$, and conditioning on $\beta$ in general destroys spreadability
(otherwise, by Kallenberg's result, every spreadable sequence were exchangeable).
To show the counterpart for partitions, we use a simple observation. First note that for $q\in{\mathcal P}_{n-1}$ the fibre   $D_j^{-1}q$ comprises  partitions of $[n] $ obtained by shifting labels
$j,\ldots,n-1\mapsto j+1,\ldots,n$ then allocating ball $j$   in one of the boxes present in $q$ or opening a new box.

\begin{lemma}\label{nostep}
Let $\pi_n\in {\mathcal P}_n$ be a partition with ranked shape $\lambda=(\lambda_1,\ldots,\lambda_k)$  satisfying $\lambda_i>\lambda_{i+1}+1$  {\rm (}respectively, $\lambda_i=1${\rm )} for some $i\leq k$,
where $\lambda_{k+1}:=0$.
If  $q=D_j\pi_n$ for some $j$ has ranked  shape $\lambda'$ with reduced part $\lambda_i'=\lambda_i-1$ {\rm (}respectively, lesser number of singletons{\rm)}, then
$D^{-1}_\ell q \cap\mathcal O_\lambda$ is a one-element set 
 for every $\ell\in [n]$. 
\end{lemma}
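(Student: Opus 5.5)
The fibre $D_\ell^{-1}q$ has a simple description (stated just before the lemma). Shift the labels $\ell,\ldots,n-1$ of $q$ up by one. Then insert ball $\ell$ either into one of the blocks $b_1,\ldots,b_m$ of $q$, or as a new singleton. This gives a bijection between $D_\ell^{-1}q$ and the set of $m+1$ choices ``join block $b$ of $q$'' or ``open a new block''. The ranked shape of the result is determined by the choice alone and does not depend on $\ell$: joining a block of size $c$ replaces the part $c$ of $s^\downarrow(q)$ by $c+1$, and opening a new block appends a part $1$. So it suffices to show that \emph{exactly one} of these $m+1$ choices produces the multiset $\lambda$. The set $D_\ell^{-1}q\cap\mathcal O_\lambda$ is then in bijection with that one choice, whatever $\ell$ is.

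Next I would pin down $\lambda'=s^\downarrow(q)$. Since $q=D_j\pi_n$, $\lambda'$ arises from $\lambda$ by decreasing one part by one, deleting it if it becomes $0$. In the first case ($\lambda_i>\lambda_{i+1}+1$) the hypothesis says the reduced part is one equal to $\lambda_i$, so $\lambda'$ is $\lambda$ with one copy of the value $c:=\lambda_i$ replaced by $c-1$. Conversely, an admissible choice must turn $\lambda'$ back into $\lambda$ by raising a part $a$ to $a+1$, or by appending a $1$. Comparing multisets, raising $a$ gives $\lambda$ only when $a=c-1$. Appending a $1$ gives $\lambda$ only when $\lambda'$ is $\lambda$ minus a part equal to $1$, which would force $c-1=0$ with the part removed. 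This is impossible here, because $c>\lambda_{i+1}+1\ge 1$ gives $c\ge2$. So the admissible choices are exactly the blocks of $q$ of size $c-1$.

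The key point is to count these blocks. A block of $q$ of size $c-1$ comes either from the reduced block of $\pi_n$, or from a block of $\pi_n$ that already had size $c-1$. The strict gap $\lambda_i>\lambda_{i+1}+1$, together with $\lambda$ nonincreasing, means no part of $\lambda$ equals $c-1$ (parts $\ge\lambda_i$ are $\ge c$, parts $\le\lambda_{i+1}$ are $<c-1$). Hence the reduced block is the unique block of size $c-1$ in $q$, and there is exactly one admissible choice. In the second case (fewer singletons, so a singleton $\{j\}$ was deleted), $\lambda'$ is $\lambda$ with one part $1$ removed. Joining a block of size $a\ge1$ changes the number of parts of size $\ge1$ relative to $\lambda'$ only by shifting sizes, so the total number of blocks stays $k-1\ne k$. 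The only admissible choice is therefore opening a new block, which again is exactly one.

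I expect the main obstacle to be stating carefully that the multiset bookkeeping is independent of $\ell$. The relabelling inside $D_\ell^{-1}$ only shifts labels and never merges or splits blocks, so block sizes of the inserted partition depend only on which block ball $\ell$ joins. Once this is noted, the lemma reduces to the uniqueness of the block of size $c-1$, or of the ``new block'' option, which follows from the gap condition.
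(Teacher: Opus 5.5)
Your proof is correct and follows the same idea as the paper. Both parametrise $D_\ell^{-1}q$ by where ball $\ell$ is inserted, note that the ranked shape depends only on that choice, and use the gap condition (or, in the singleton case, the block count) to leave exactly one choice that restores $\lambda$. The paper only illustrates this with the example $(5,5,3,3,1,1)\to(5,4,3,3,1,1)$, whereas you carry out the multiset bookkeeping in general.
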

\begin{proof} The example best explains the matters.
Suppose by deleting a ball $a$ the  ranked shape $(5,5,3,3,1,1)$ was reduced to $(5,4,3,3,1,1)$, then there is no ambiguity where a ball $b$ needs to be appended to return to $(5,5,3,3,1,1)$. 
If $a$ was a singleton, then $b$ must be a singleton.
Compare with shape $(5,4,3,3,1,1)$ which after reducing to $(5,3,3,3,1,1)$ can be recovered in a threefold way.
\end{proof}

\begin{proposition}\label{SingleOrbit}
 If $\Pi_n$ is a spreadable partition with constant ranked shape,
$s^\downarrow(\Pi_n)=\lambda$ a.s.,
 then $\Pi_n$ is  exchangeable, with symmetric probability function $p^*_\lambda$, and is extreme among all
spreadable partitions.
\end{proposition}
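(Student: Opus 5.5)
Suppose $P$ is spreadable and supported on the orbit $\mathcal O_\lambda$. The aim is to show that $P$ is constant on $\mathcal O_\lambda$. Extremality then follows quickly. If $P=tP'+(1-t)P''$ with $P',P''$ spreadable and $0<t<1$, both $P'$ and $P''$ are supported on $\mathcal O_\lambda$. By the first part both equal the uniform distribution on $\mathcal O_\lambda$, so $P'=P''=P$.

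Constancy is obtained from Lemma~\ref{nostep}. Choose a part index $i$ of a special type: either $\lambda_i>\lambda_{i+1}+1$, with the convention $\lambda_{k+1}=0$, or $\lambda_i=1$. Such an index always exists. Take the largest $i$ with $\lambda_i=\lambda_1$ among the last group of equal parts; more simply, take $i=k$. If $\lambda_k=1$ we are in the singleton case. If $\lambda_k\ge2$, then $\lambda_k>\lambda_{k+1}+1=1$.

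Consider the reduced ranked shape $\lambda'$, obtained by lowering $\lambda_k$ by one, or removing a singleton. Fix $q\in\mathcal P_{n-1}$ with ranked shape $\lambda'$. Since $P$ lives on $\mathcal O_\lambda$, the spreadability rule \eqref{SprRule} at $q$ involves only the fibres $D_\ell^{-1}q\cap\mathcal O_\lambda$. By Lemma~\ref{nostep} each of these is a single partition $\pi^{(\ell)}(q)$. Hence
\[
P(\pi^{(1)}(q))=\cdots=P(\pi^{(n)}(q)).
\]
This defines a graph on $\mathcal O_\lambda$: join $\pi$ and $\pi'$ whenever $\pi=\pi^{(\ell)}(q)$ and $\pi'=\pi^{(m)}(q)$ for some $q,\ell,m$. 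In words, $\pi'$ arises from $\pi$ by deleting one element of a smallest block (of size $\lambda_k$) and reinserting a ball at any position into the block that must be restored. $P$ is constant on connected components of this graph, so it suffices to show the graph is connected.

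The main obstacle is this connectivity. The moves relocate a single ball of a block of minimal size to an arbitrary position. Through the relabelling they also shift the labels in between by one, so the effect is a cyclic relabelling of an interval.

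I would show that these moves generate enough transpositions to reach every set partition of the orbit. The first step is to check that a move realises the transposition $(a,a+1)$ when $a$ and $a+1$ lie in different blocks and one of them lies in a block of size $\lambda_k$. To do this, delete $a$ and reinsert at position $a+1$, or vice versa.

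The second step handles larger blocks. Use the fact that any block can be moved through the labels by repeatedly transporting its elements, using min-size blocks as "carriers": swapping labels with a min-size block neighbour changes which block occupies a given label set. If this proves delicate, I would instead argue by induction on $n$, deleting the element $n$. Alternatively I would use the reduction to the intermediate partition $q$ together with exchangeability of the restrictions. The restrictions $D_n\Pi_n$ are spreadable by heredity, so by induction on $n$ they are exchangeable. The single-valued correspondence of Lemma~\ref{nostep} then lifts uniformity on the fibres back to uniformity on $\mathcal O_\lambda$. This last route, combining induction with the bijective fibres, is what I would try first.
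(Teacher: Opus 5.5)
\textbf{There is a genuine gap: the connectivity step fails.}

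Your reduction to single-element fibres is correct, and so is your first step. Spreadability at a $q$ of ranked shape $\lambda'$ gives $P(\pi)=P(\sigma\pi)$ for $\sigma=(a,a+1)$ whenever $a$ or $a+1$ lies in a block of size $\lambda_k$. This is exactly the mechanism the paper uses. The problem is the connectivity claim, which is false in general.

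The partitions $\pi^{(\ell)}(q)$ and $\pi^{(m)}(q)$ both reduce to $q$ once the reinserted ball is removed, and that ball always lies in a block of size $\lambda_k$. Hence the standardised restriction of $\pi$ to the union of its blocks of size $>\lambda_k$ is invariant under every move. As soon as $\lambda$ has two parts exceeding $\lambda_k$, your graph is disconnected. For $\lambda=(2,2,1)$, $n=5$, the moves only slide the singleton. Then $\mathcal O_\lambda$ splits into three components of five partitions each, indexed by the pattern $12|34$, $13|24$ or $14|23$ of the two pairs, so no ``carrier'' argument can work.

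The missing information comes from fibres containing several elements of the orbit. Write $g(\cdot)$ for the value of $P$ on a component. At $q=12|3|4$, of ranked shape $(2,1,1)$, each fibre meets $\mathcal O_\lambda$ in two partitions. The fibre sums for $D_5$, $D_1$, $D_2$ are $2g(12|34)$, $2g(14|23)$ and $2g(13|24)$, and spreadability equates them.

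\textbf{Your fallback does not repair this.} $D_n\Pi_n$ is spreadable, but it is not single-orbit, because its ranked shape depends on the block containing $n$. So the induction hypothesis does not apply, and spreadable partitions of $[n-1]$ need not be exchangeable (Example~\ref{Ex1} with $n-1=5$). If you did know that $\mathbb P[D_n\Pi_n=q]=P(\pi^{(n)}(q))$ is constant over $q\in\mathcal O_{\lambda'}$, your moves would finish the proof. So the whole difficulty sits in this unjustified step.

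\textbf{What the paper does instead.} It applies induction to a genuinely single-orbit object.
\begin{enumerate}
\item It tags the box of size $\lambda_k$ with the smallest starter.
\item It uses your transpositions to move that box's elements down to $\{1,\ldots,\lambda_k\}$. Thus $P(\pi)$ equals $P$ of a partition with that box in front and the other balls relabelled in order.
\item It applies the induction hypothesis to the conditional law of the remaining partition of $[n]\setminus[\lambda_k]$, which has constant ranked shape $(\lambda_1,\ldots,\lambda_{k-1})$.
\end{enumerate}
The point that needs work there is that this conditional law is spreadable. That is where relations from multi-element fibres, like the one above, have to enter.
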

\begin{proof} We argue in terms of distribution $P$ on ${\mathcal P}_n$ using induction in $n$. The smallest part  $\lambda_k$ of $\lambda=(\lambda_1,\ldots,\lambda_k)$ always satisfies conditions of Lemma \ref{nostep}. 
Among the boxes of size  $\lambda_k$ we tag the one with smallest starter. To be definite, let $\lambda_k=3$, and for 
$$0\leq a+1\leq b+1\leq c+1\leq n+1$$
let $A(a,b,c)$ be the set of partitions $\pi_n\in {\mathcal O}_\lambda$ whose tagged box content is $abc$.

Suppose $0<a+1$.  For $\pi_n\in A(a,b,c)$ the partition $q=D_a \pi_n$ satisfies   $D_a^{-1}q=\{\pi_n\}$ as well as   $D_{a-1}^{-1}q=\{\pi_n'\}$ for  the partition $\pi_n'\in A(a-1,b,c)$ satisfying 
$\pi_n'= \sigma \pi_n$, where $\sigma$ is the adjacent transposition $(a-1, a)$.
By the uniqueness, the spreadability rule (\ref{SprRule}) in the form

\begin{equation}\nonumber
\sum_{\pi_n\in{\mathcal O}_\lambda :\,D_a\pi_n=q} P(\pi_n)
=
\sum_{\pi_n\in{\mathcal O}_\lambda:\,D_{a-1}\pi_n=q} P(\pi_n),
\end{equation}
 yields $P(\pi_n)=P(\pi_n')$.
It follows that $\sigma$ acts as a $P$-preserving bijection between $A(a,b,c)$ and $A(a-1,b,c)$.  This way exchanging the balls in the tagged box in the direction of smaller labels,
we obtain such bijections between each $A(a,b,c)$ and $A(1,2,3)$. Now, conditionally on $A(1,2,3)$ the distribution $P$ on partitions of $[n]\setminus[3]$ is spreadable and supported
on the single orbit ${\mathcal O}_{(\lambda_1,\ldots,\lambda_{k-1})}$, therefore by the induction hypothesis the conditional distribution is exchangeable.
Since all strata $A(a,b,c)$ have the same probability, $P$ is the uniform distribution on ${\mathcal O}_\lambda$, with probability function $p^*_\lambda$.

The case $\lambda_k=1$ is special in that the tagged singleton box $\{a\}$ is deleted in $q=D_a\pi_n$, but gets resurrected by the starter $a-1$   in $\pi_n'$. 

For the  claim on extremality,
the distribution $P$ can be decomposed only as a mixture of spreadable distributions supported on ${\mathcal O}_\lambda$, hence by the above all components coincide with $P$.
\end{proof}

Stepping aside from our main line,  as a by-product we obtain a result on the alphabet-symmetric spreadable sequences. 

\begin{corollary}\label{Cor1}
Let $\xi_1,\ldots,\xi_n$ be a spreadable sequence with values in $[k]$. If the random measure $\sum_{j=1}^n \delta_{\xi_j}$ is invariant and ergodic under permutations $\tau\in{\mathfrak S}_k$ acting
as $\tau(\xi_1),\ldots,\tau(\xi_n)$, then $\xi_1,\ldots,\xi_n$ is exchangeable.
\end{corollary}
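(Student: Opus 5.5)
The plan is to pass from the sequence to the partition it induces, apply Proposition~\ref{SingleOrbit} to that partition, and then recover the law of the sequence from the partition together with the alphabet symmetry.

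Let $\Pi_n$ be the partition of $[n]$ into the classes $i\sim j \Leftrightarrow \xi_i=\xi_j$. By the direction $\Leftarrow$ of Proposition~\ref{Prs}, $\Pi_n$ is spreadable, since deleting a coordinate of the sequence corresponds to $D_j$ acting on the partition.

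Next we show that $\Pi_n$ has constant ranked shape. The ranked shape $s^\downarrow(\Pi_n)$ is the nonincreasing rearrangement of the counts $(\beta\{1\},\ldots,\beta\{k\})$, where $\beta=\sum_j\delta_{\xi_j}$, with zero counts omitted. This is a function of $\beta$ that is invariant under the action of ${\mathfrak S}_k$. The ergodicity hypothesis means that the law of $\beta$ cannot be written as a nontrivial mixture of ${\mathfrak S}_k$-invariant laws. Since ${\mathfrak S}_k$ is finite, the law of $\beta$ must then be uniform on a single orbit, namely the orbit $\{\tau\beta_0:\tau\in{\mathfrak S}_k\}$ of one count vector $\beta_0$. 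Hence $s^\downarrow(\Pi_n)=\lambda$ almost surely, where $\lambda$ is the ranked shape read off from $\beta_0$. Proposition~\ref{SingleOrbit} then gives that $\Pi_n$ is exchangeable, with uniform distribution on the orbit ${\mathcal O}_\lambda$.

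To recover the law of $(\xi_1,\ldots,\xi_n)$, write $\xi_i=\phi(b(i))$, where $b(i)$ is the index of the block of $\Pi_n$ containing $i$, blocks being ordered by their starters, and $\phi$ is the random injection sending blocks to letters. The main step is to show that, conditionally on $\Pi_n=\pi$, the law of $\phi$ depends only on the shape vector $s(\pi)$ and not on $\pi$ itself. This follows from invariance of the law of $\xi$ under ${\mathfrak S}_k$, which holds because $\beta$ is invariant. Indeed, the law of $\xi$ is ${\mathfrak S}_k$-invariant, so conditionally on $\Pi_n=\pi$ the labelling $\phi$ is uniform within each class of equal block sizes. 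The only constraint linking $\phi$ to block sizes is that a block of size $m$ must receive a letter whose count in $\beta_0$ equals $m$. Given $\pi$, there are exactly $\prod_m r_m!$ admissible labellings, where $r_m$ is the number of blocks of size $m$ (all letters are then used by the orbit structure, since $\beta$ lies in a single orbit). Combining this with ${\mathfrak S}_k$-invariance, each admissible labelling is equally likely.

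More directly, the argument runs as follows. The law of $\xi$ is ${\mathfrak S}_k$-invariant, and for any $\sigma\in{\mathfrak S}_n$ we want $\xi\circ\sigma\stackrel{d}{=}\xi$. A word $w$ determines its induced partition $\pi(w)$ and its count vector $\beta(w)$. By the two preceding paragraphs, $P(\xi=w)$ equals the uniform mass on ${\mathcal O}_\lambda$ multiplied by the conditional probability of the labelling given $\pi(w)$. The first factor is invariant under $\sigma$ by exchangeability of $\Pi_n$. The second factor is $1/\#\{\text{admissible labellings}\}$, which depends only on $\lambda$. Hence $P(\xi=w)$ is constant over all words with a fixed $\beta$ in the orbit, and therefore $P(w\circ\sigma)=P(w)$.

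The step expected to be the main obstacle is justifying that the conditional labelling is uniform over admissible labellings, that is, showing that ${\mathfrak S}_k$-invariance of the law of $\xi$ follows from the hypothesis. If the hypothesis only asserts invariance of $\beta$, we would first replace $\xi$ by $\tau(\xi)$ with $\tau$ uniform on ${\mathfrak S}_k$ and independent of $\xi$. This symmetrised sequence remains spreadable, and the exchangeability conclusion would then have to be transferred back to $\xi$ itself. Interpreting the assumption as invariance of the law of the whole sequence (alphabet-symmetry) makes the argument go through cleanly, so we adopt that reading.
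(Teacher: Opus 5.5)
Your proposal is correct and follows the paper's own route. The induced partition is spreadable and single-orbit, so Proposition~\ref{SingleOrbit} makes it exchangeable, and alphabet symmetry then makes $\xi$ the uniformly random marking of its block-tracking word. The paper cites Lemma~\ref{repLemma} for this last step, while you compute $P(\xi=w)$ directly. Your count of admissible labellings is slightly off: given $\Pi_n=\pi$ with $r$ blocks, $\phi$ is uniform over all $k!/(k-r)!$ injections, and not all letters need be used. That factor is still constant in $w$, which is all you need.

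Note also that the alphabet-symmetric reading you adopt is forced, not merely convenient. Take the paper's own example relabelled to values in $[3]$: $(\xi_1,\xi_2)$ uniform on $\{(1,2),(2,3),(3,1)\}$. It is spreadable, and $\beta$ is uniform on a single ${\mathfrak S}_3$-orbit, yet the sequence is not exchangeable. So under the weaker reading (invariance of $\beta$ alone) the statement is false, and no ``transfer back'' from the symmetrised sequence is possible.
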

\begin{proof}
By the assumption,
the partition $\Pi_n$ 
induced by the sequence via $i\sim j\Leftrightarrow\xi_i=\xi_j$
is spreadable and single-orbit, therefore $\Pi_n$ is exchangeable according to Proposition \ref{SingleOrbit}.
Relative to this $\Pi_n$,    by Lemma \ref{repLemma} the sequence $\xi_1,\ldots,\xi_n$ is obtained  by marking the block-tracking word with uniformly random permutation $\tau(1),\ldots  \tau(k)$.
\end{proof}


We see that, unlike partial exchangeability, the genuine finite spreadability is not reduced to a refinement of orbits, but rather allocates mass  to parts of different orbits of the symmetric group. The following example of extreme spreadable partition demonstrates a phenomenon of uniform mass transition between the levels.%

\begin{example}\label{Ex2}{\rm (A $n=5$ extreme spreadable partition with a uniform distribution.)
Consider the random partition $\Pi_5$ on $[5]$ with uniform distribution on a set intersecting  three  ${\mathfrak S}_5$-orbits in $\mathcal P_5$:
\begin{equation*}
\begin{aligned}
&14 \mid 235, \quad 135 \mid 24, \quad 125 \mid 34, \quad 123 \mid 45, \\
&1 \mid 2 \mid 345, \quad 1 \mid 245 \mid 3, \quad 1 \mid 234 \mid 5, \quad 145 \mid 2 \mid 3, \quad 134 \mid 2 \mid 5, \quad 124 \mid 3 \mid 5, \\
&12 \mid 35 \mid 4, \quad 13 \mid 25 \mid 4, \quad 15 \mid 23 \mid 4.
\end{aligned}
\end{equation*}

Notably, all five deletions act as bijections, resulting in the uniform distribution on a union of four orbits in $\mathcal P_4$:

\begin{minipage}[t]{0.48\textwidth}
\centering
\begin{tabular}{l | l}
$\pi_5$ & $D_1\pi_5$ \\
\hline
$14 \mid 235$ & $124 \mid 3$ \\
$135 \mid 24$ & $13 \mid 24$ \\
$125 \mid 34$ & $14 \mid 23$ \\
$123 \mid 45$ & $12 \mid 34$ \\
$1 \mid 2 \mid 345$ & $1 \mid 234$ \\
$1 \mid 245 \mid 3$ & $134 \mid 2$ \\
$1 \mid 234 \mid 5$ & $123 \mid 4$ \\
$145 \mid 2 \mid 3$ & $1 \mid 2 \mid 34$ \\
$134 \mid 2 \mid 5$ & $1 \mid 23 \mid 4$ \\
$124 \mid 3 \mid 5$ & $13 \mid 2 \mid 4$ \\
$12 \mid 35 \mid 4$ & $1 \mid 24 \mid 3$ \\
$13 \mid 25 \mid 4$ & $14 \mid 2 \mid 3$ \\
$15 \mid 23 \mid 4$ & $1 \mid 23 \mid 4$
\end{tabular}
\end{minipage}
\hfill
\begin{minipage}[t]{0.48\textwidth}
\centering
\begin{tabular}{l | l}
$\pi_5$ & $D_2\pi_5$ \\
\hline
$14 \mid 235$ & $13 \mid 24$ \\
$135 \mid 24$ & $124 \mid 3$ \\
$125 \mid 34$ & $14 \mid 23$ \\
$123 \mid 45$ & $12 \mid 34$ \\
$1 \mid 2 \mid 345$ & $1 \mid 234$ \\
$1 \mid 245 \mid 3$ & $1 \mid 2 \mid 34$ \\
$1 \mid 234 \mid 5$ & $1 \mid 23 \mid 4$ \\
$145 \mid 2 \mid 3$ & $134 \mid 2$ \\
$134 \mid 2 \mid 5$ & $123 \mid 4$ \\
$124 \mid 3 \mid 5$ & $13 \mid 2 \mid 4$ \\
$12 \mid 35 \mid 4$ & $1 \mid 24 \mid 3$ \\
$13 \mid 25 \mid 4$ & $1 \mid 23 \mid 4$ \\
$15 \mid 23 \mid 4$ & $14 \mid 2 \mid 3$
\end{tabular}
\end{minipage}

\vspace{0.6cm}
\begin{minipage}[t]{0.31\textwidth}
\centering
\begin{tabular}{l | l}
$\pi_5$ & $D_3\pi_5$ \\
\hline
$14 \mid 235$ & $13 \mid 24$ \\
$135 \mid 24$ & $14 \mid 23$ \\
$125 \mid 34$ & $124 \mid 3$ \\
$123 \mid 45$ & $12 \mid 34$ \\
$1 \mid 2 \mid 345$ & $1 \mid 2 \mid 34$ \\
$1 \mid 245 \mid 3$ & $1 \mid 234$ \\
$1 \mid 234 \mid 5$ & $1 \mid 23 \mid 4$ \\
$145 \mid 2 \mid 3$ & $134 \mid 2$ \\
$134 \mid 2 \mid 5$ & $13 \mid 2 \mid 4$ \\
$124 \mid 3 \mid 5$ & $123 \mid 4$ \\
$12 \mid 35 \mid 4$ & $1 \mid 23 \mid 4$ \\
$13 \mid 25 \mid 4$ & $1 \mid 24 \mid 3$ \\
$15 \mid 23 \mid 4$ & $14 \mid 2 \mid 3$
\end{tabular}
\end{minipage}
\hfill
\begin{minipage}[t]{0.31\textwidth}
\centering
\begin{tabular}{l | l}
$\pi_5$ & $D_4\pi_5$ \\
\hline
$14 \mid 235$ & $1 \mid 234$ \\
$135 \mid 24$ & $134 \mid 2$ \\
$125 \mid 34$ & $124 \mid 3$ \\
$123 \mid 45$ & $123 \mid 4$ \\
$1 \mid 2 \mid 345$ & $1 \mid 2 \mid 34$ \\
$1 \mid 245 \mid 3$ & $1 \mid 24 \mid 3$ \\
$1 \mid 234 \mid 5$ & $1 \mid 23 \mid 4$ \\
$145 \mid 2 \mid 3$ & $14 \mid 2 \mid 3$ \\
$134 \mid 2 \mid 5$ & $13 \mid 2 \mid 4$ \\
$124 \mid 3 \mid 5$ & $1 \mid 24 \mid 3$ \\
$12 \mid 35 \mid 4$ & $12 \mid 34$ \\
$13 \mid 25 \mid 4$ & $13 \mid 24$ \\
$15 \mid 23 \mid 4$ & $14 \mid 23$
\end{tabular}
\end{minipage}
\hfill
\begin{minipage}[t]{0.31\textwidth}
\centering
\begin{tabular}{l | l}
$\pi_5$ & $D_5\pi_5$ \\
\hline
$14 \mid 235$ & $14 \mid 23$ \\
$135 \mid 24$ & $13 \mid 24$ \\
$125 \mid 34$ & $12 \mid 34$ \\
$123 \mid 45$ & $123 \mid 4$ \\
$1 \mid 2 \mid 345$ & $1 \mid 2 \mid 34$ \\
$1 \mid 245 \mid 3$ & $1 \mid 24 \mid 3$ \\
$1 \mid 234 \mid 5$ & $1 \mid 234$ \\
$145 \mid 2 \mid 3$ & $14 \mid 2 \mid 3$ \\
$134 \mid 2 \mid 5$ & $124 \mid 3$ \\
$124 \mid 3 \mid 5$ & $124 \mid 3$ \\
$12 \mid 35 \mid 4$ & $12 \mid 3 \mid 4$ \\
$13 \mid 25 \mid 4$ & $13 \mid 2 \mid 4$ \\
$15 \mid 23 \mid 4$ & $1 \mid 23 \mid 4$
\end{tabular}
\end{minipage}
\vskip0.2cm

The distribution of $\Pi_5$ 
is invariant under the cyclic subgroup of order four,  generated by the cycle $\sigma = (1 \; 2 \; 3 \; 5)$.
Realising the partition via the marking construction, we obtain a genuinely spreadable $[3]$-valued
random sequence $\xi_1, \ldots, \xi_5$, by choosing a block-tracking word uniformly from the list
\vskip0.1cm
\begin{equation*}
\begin{aligned}
&12212, \quad 12121, \quad 11221, \quad 11122, \\
&12333, \quad 12322, \quad 12223, \\
&12311, \quad 12113, \quad 11213, \\
&11232, \quad 12132, \quad 12231,
\end{aligned}
\end{equation*}
and replacing its letters $1,2,3$ with $\tau(1),\tau(2),\tau(3)$, a uniform permutation of $[3]$.
}
\end{example}
\vskip0.2cm

The existence of non-exchangeable partitions among spreadable implies that the polytope of spreadable partitions is not a simplex.
Indeed, let $Q$ be non-exchangeable spreadable and $P$ fully suported exchangeable, for instance (\ref{Bell}). Then  for sufficiently small
positive $\alpha$ the identity
$$ P=\frac{1+\alpha}{2}  \,\frac{P+\alpha Q}{1+\alpha}+ \frac{1+\alpha}{2}\,  \frac{P-\alpha Q}{1-\alpha}$$
shows that $P$ is the middlepoint between two non-exchangeable spreadable distributions. Indeed, positivity of the components and normalisation are implied by the linearity,
and both are nonexchangeable due to the asymmetric $Q$. On the other hand, $P$ is decomposable over exchangeable distributions, hence its mixture  representation is not unique
and the polytope is not a simplex. 
This also implies that the simplex of exchangeable distributions is not an exposed face lying on a supporting hyperlan, rather belongs to a section of the polytope.
We give a concrete $n=5$ example, though use deformation of $P$ in the direction of a signed measure.

\begin{example}\label{Ex3}{\rm
(A mixture of genuinely spreadable partitions may be exchangeable.)
 We construct a {\it signed} measure $S$ on ${\mathcal P}_5$ with zero total mass:
\[
\begin{array}{l|r}
\pi_5 & S(\pi_5) \\
\hline
1 \mid 2 \mid 3 \mid 45 & -1 \\
1 \mid 2 \mid 34 \mid 5 & -1 \\
1 \mid 2 \mid 35 \mid 4 & -1 \\
1 \mid 2 \mid 345      & +1 \\[1mm]
12 \mid 3 \mid 4 \mid 5 & -2 \\[1mm]
12 \mid 3 \mid 45      & +1 \\
12 \mid 34 \mid 5      & +1 \\
12 \mid 35 \mid 4      & +1 \\
12 \mid 345            & -1 \\
\hline
\textbf{Total} & 0
\end{array}
\]
This has the property that $D_j S$ is the measure with a sole weight $-2$ on the singleton partition $1|2|3|4$,
for $1\leq j\leq 5$. Let $P$ be a fully supported exchangeable distribution on $\mathcal P_5$, for instance (\ref{Bell}).
Due to these properties and since $S$ is not exchangeable, for small enough $\alpha$ the sum $P\pm\alpha S$ is a spreadable distribution on $\mathcal P_5$. Therefore $P$ is a barycenter of two spreadable nonexchangeable distributions.
}
\end{example}
\vskip0.0cm
For $n=5$ the polytope of spreadable distributions has affine dimension $17$.
An automatic search returns hundreds of extreme points, which implies that the
decomposition over  extremes  is by far not unique.
The nonsimplicial geometry of the   polytope  means 
that it is impossible to represent in a unique way the generic distribution as a mixture of some basic structures;
in this sense spreadability does not admit a sufficient statistic. 
In  the extreme spreadable partition of Example \ref{Ex2}  even the number of blocks varies across the suport of the distribution.

Introducing finite spreadable sequences Kallenberg \cite{KallenbergPTRF} wrote 
`we are immediately struck by the peculiar lack of symmetry or simple pattern.
It is indeed remarkable that so many wondrous properties are hidden behind such
an apparent complexity and disorder. The present irregularity is in sharp contrast
with the trite symmetry in the exchangeable case'. 
This view is confirmed by the spreadable partitions: the  monstrous combinatorial object 
collapses asymptotically  to the highly homogeneous simplicial class of exchangeable partitions.

\section{Stationarity}

\noindent
An infinite random partition $\Pi=(\Pi_n)$ is {\it stationary} if it  is invariant under the unit shift $i\mapsto i+1$, that is
\begin{equation}\label{stat}
\Pi\stackrel{d}{=} \Pi|_{{\mathbb N}\setminus\{1\}}.
\end{equation}
We can express (\ref{stat}) as the elementary deletion invariance
\begin{equation}\label{delstat}
D_1\Pi\stackrel{d}{=} \Pi,
\end{equation}
where $D_1$ acts by deleting ball $1$ then shifting down all other labels.
 Two further equivalent conditions are formulated in terms of the bivariate array $(R_{ij})$, where
$R_{ij}$ denotes the indicator of the event that $i,j$ belong to the same block of $\Pi$:
\begin{enumerate}
\item[(i)]  $(R_{ij})\stackrel{d}{=} (R_{i+1,j+1})$,
\item[(ii)]  
$R_{ij}=1(\xi_i=\xi_j),$
\end{enumerate}
for some stationary  sequence of random variables $\xi_1,\xi_2,\ldots$. The equivalence of (ii) with 
(\ref{stat}) follows by the marking device (similarly to Proposition \ref{Prs}) using an alphabet-symmetric sequence.

The existence of block frequencies and their invariance under size-biased  permutation are the cornerstones of the theory of exchangeable partitions. We show next that 
the stationarity alone implies the existence of frequencies and that the  first frequency is a size-biased pick from all.

\begin{proposition}\label{StaPr}
For a stationary partition $\Pi$, the blocks arranged by increase of their starters possess asymptotic frequencies $F_1,F_2,\ldots$. Conditionally on the unordered frequency distribution
{\rm (\ref{nu})}, the frequency of the first block has distribution $\nu$.
\end{proposition}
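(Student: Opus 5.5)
Via the characterisation (ii), we realise $\Pi$ as the partition into level sets of a stationary sequence $\xi_1,\xi_2,\ldots$. We take the marking to be alphabet-symmetric with marks drawn from a continuous distribution, independent of $\Pi$; by the marking device of Proposition~\ref{Prs} this can be done with $(\xi_n)$ stationary. The strategy is then to apply Birkhoff's ergodic theorem to suitable functionals of the shifted sequence.

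\emph{Existence of frequencies.} For the block of ball $i$, the relative frequency is
\[
\frac1n\sum_{m=1}^n 1(\xi_m=\xi_i).
\]
Work on the invariant $\sigma$-algebra $\mathcal I$ of the shift. For each fixed value $v$, the function $1(\xi_1=v)$ is integrable, so $n^{-1}\sum_{m\le n}1(\xi_m=v)$ converges a.s. to ${\mathbb P}[\xi_1=v\mid\mathcal I]$.

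There are uncountably many values $v$, so we cannot apply this directly with $v=\xi_i$. Instead we apply the ergodic theorem to the product space: for fixed $i$, consider the stationary sequence of pairs $(\xi_i,\xi_{i+m})_{m\ge 0}$. Concretely, we work with the $\mathcal I$-conditional law $\mu$ of $\xi_1$, which is a random probability measure. Its atoms are countably many, and for each atom the frequency converges. The non-atomic part contributes only values seen at most once: by stationarity and the conditional ergodic theorem, ${\mathbb P}[\xi_m=\xi_i$ for some $m\ne i$, $\xi_i$ not an atom of $\mu]=0$. To justify this, compute the expected number of repeats of $\xi_i$ in a window via $\sum_m{\mathbb P}[\xi_m=\xi_i\mid\mathcal I]$, whose normalised limit is $\mu\{\xi_i\}$. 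Hence every block has frequency $F=\mu\{\text{its mark}\}$, which gives $F_1,F_2,\ldots$ in starter order, and singleton-type blocks have frequency $0$. The unordered frequency distribution $\nu$ in (\ref{nu}) is then the image of $\mu$ under $x\mapsto\mu\{x\}$. Since $\mu$ is $\mathcal I$-measurable, so is $\nu$, and $\nu$ is also the limit of the empirical analogues for $\Pi_n$.

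\emph{Size-biased first pick.} The first block is the block of ball $1$, so $F_1=\mu\{\xi_1\}$. Conditionally on $\mathcal I$, $\xi_1$ has law $\mu$; this is the defining property of conditional expectation given the invariant $\sigma$-algebra for a stationary sequence. Therefore
\[
{\mathbb P}[F_1\in dx\mid\mathcal I]=\mu\{y:\mu\{y\}\in dx\}=\nu(dx).
\]
Since $\nu$ is $\mathcal I$-measurable, conditioning on $\nu$ instead of $\mathcal I$ (tower property) gives the claim.

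\emph{Main obstacle.} The delicate step is identifying $\lim_n n^{-1}\#\{m\le n:\xi_m=\xi_1\}$ with $\mu\{\xi_1\}$, since the functional depends on $\xi_1$ itself. We would handle this by first discretising: approximate indicators by $1(\xi_m\in B)$ over a countable generating family of sets $B$, apply the ergodic theorem to each, and then pass to atoms by taking intersections over shrinking $B\ni\xi_1$. Alternatively, one can use the ergodic decomposition and argue that under each ergodic component the law of large numbers holds for $1(\xi_m=\xi_1)$ conditionally on $\xi_1$, via Fubini. A secondary point is checking that the continuous-marking construction really yields a stationary sequence. This follows exactly as in Lemma~\ref{repLemma}, with the bijection taken to be the shift.
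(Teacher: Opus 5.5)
Your route differs from the paper's, and it works once one step is repaired.

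\textbf{Comparison with the paper.} The paper never leaves the partition. It writes the moments $\sum_i F_{i,n}^m$ of the empirical measure $\nu_n$ as averages of products $R_{i_1i_2}\cdots R_{i_{m-1}i_m}$ over $[n]^m$. It obtains $\nu_n\Rightarrow\nu$ from the Hausdorff moment theorem. It then compares the averages $A_n(j)$ centred at different $j$ to get $\mathbb E[F_1^{m-1}\mid\nu]=\int x^{m-1}\nu({\rm d}x)$.

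You instead lift $\Pi$ to a stationary sequence with continuous marks and take the directing measure $\mu=\mathbb P[\xi_1\in\cdot\mid\mathcal I]$. You identify each block frequency with the $\mu$-mass of the block's mark. The size-biased property then follows in one line from the disintegration $\xi_1\mid\mathcal I\sim\mu$, and even in the stronger form conditional on $\mathcal I$ rather than on $\nu$.

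Your version uses only the one-parameter Birkhoff theorem for functions of $\xi$ and yields each $F_k$ directly. The paper's averages over $[n]^m$ are taken for an array invariant only under the diagonal shift, so they are not a direct instance of Birkhoff. The paper also leaves implicit the passage from $\nu_n\Rightarrow\nu$ to $F_i=\lim_n F_{i,n}$. On the other hand, the paper's route needs no auxiliary marks and does prove $\nu_n\Rightarrow\nu$, which you only assert. That is harmless, since it is not part of the statement.

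\textbf{The step to repair.} You claim that non-atomic marks are seen at most once, i.e.\ $\mathbb P[\xi_m=\xi_i \text{ for some } m\neq i,\ \xi_i \text{ not an atom of } \mu]=0$. This is false. Let $\Pi$ be $12|34|56|\cdots$ or $1|23|45|\cdots$ with probability $1/2$ each. $D_1$ interchanges the two, so $\Pi$ is stationary. With i.i.d.\ uniform marks the empirical measure of $\xi_1,\ldots,\xi_n$ tends to Lebesgue measure, so $\mu$ has no atoms, yet $\xi_2$ is always repeated. Zero-frequency blocks need not be singletons.

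All you actually need is that such blocks have zero density, and your discretisation gives exactly that. On the a.s.\ event where $n^{-1}\sum_{m\le n}1(\xi_m\in B)\to\mu(B)$ for every dyadic interval $B$, letting $B\downarrow\{\xi_i\}$ yields $\limsup_n n^{-1}\#\{m\le n:\xi_m=\xi_i\}\le\mu\{\xi_i\}$ simultaneously for all $i$.

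This is only an upper bound, so it does not by itself identify the limit when $\xi_i$ is an atom, contrary to what your last paragraph suggests. For that, enumerate the atoms of $\mu$ as $\mathcal I$-measurable random variables $a_1,a_2,\ldots$ (say by decreasing mass, ties broken by location). Since each $a_j$ is shift-invariant, Birkhoff applies to $1(\xi_1=a_j)$ and gives $n^{-1}\#\{m\le n:\xi_m=a_j\}\to\mathbb P[\xi_1=a_j\mid\mathcal I]=\mu\{a_j\}$. This is what your ergodic-decomposition alternative amounts to. It also shows that every atom is actually hit, which you need in order to identify $\nu$ with the image of $\mu$ under $y\mapsto\mu\{y\}$.

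With these two facts replacing the false claim, the rest of your argument goes through as written.
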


\begin{proof}
For each $n$, let $n^{-1}s(\Pi_n)=(F_{1,n},F_{2,n},\ldots)$ be the normalised shape vector (padded by zeroes), and define the  probability measure
\[
\nu_n: = \sum_i F_{i,n}\,\delta_{F_{i,n}}.
\]
For $m\ge1$, writing down the moments of $\nu_n$ and expressing in two ways the  proportion of ordered $m$-tuples of balls lying in the same block of $\Pi_n$ we obtain
\begin{equation}\label{MomCon}
\int_0^1 x^{m-1}\,\nu_n({\rm d}x)=\sum_i F_{i,n}^{\,m}
=
\frac{1}{n^{m}}
\sum_{(i_1,\ldots,i_m)\in[n]^m}
R_{i_1,i_2}\cdots R_{i_{m-1},i_m},
\end{equation}
By stationarity and the Ergodic Theorem applied to the shift-invariant array $(R_{ij})$, these averages converge almost surely, and therefore 
the Hausdorff Moment Theorem implies that 
$\nu_n \Rightarrow \nu$ for some random probability measure $\nu$ on $[0,1]$. This yields existence of block frequencies $F_i=\lim_{n\to\infty} F_{i,n}$.

For the second part, note that
\[
F_1 = \lim_{n\to\infty} \frac{1}{n}\sum_{i=1}^n R_{i1}.
\]
Hence for $m\ge1$,
\[
F_1^{m-1}
=
\lim_{n\to\infty}
\frac{1}{n^{m-1}}
\sum_{(i_1,\ldots,i_{m-1})\in[n]^{m-1}}
R_{i_1,1}\cdots R_{i_{m-1},1}.
\]
For fixed $j$, split this sum into tuples with all indices in $\{j,\ldots,n\}$ and the remainder, and note that the latter contribute a negligible proportion. 
On $\{j,\ldots,n\}^{m-1}$, shifting indices by $j-1$ yields, by stationarity,
$$(R_{i_1,j},\ldots,R_{i_{m-1},j})
\;\stackrel{d}{=}\;
(R_{i_1-j+1,1},\ldots,R_{i_{m-1}-j+1,1}),
$$
so the resulting sum has the same asymptotic behaviour as for $j=1$. Hence the averages
\[
A_n(j):=\frac{1}{n^{m-1}}\sum_{(i_1,\ldots,i_{m-1})} R_{i_1,j}\cdots R_{i_{m-1},j}
\]
all have the same limit. Now also averaging over $j$ yields
\[
\frac{1}{n^{m}}
\sum_{(j,i_1,\ldots,i_{m-1})\in[n]^m}
R_{i_1,j}\cdots R_{i_{m-1},j}
\;\to\;
F_1^{m-1}.
\]
Recalling (\ref{MomCon})
and passing to the limit gives
\[
\mathbb{E}[F_1^{m-1}\mid \nu]
=
\int_0^1 x^{m-1}\,\nu(dx),
\]
which identifies the conditional law of $F_1$ as $\nu$.
\end{proof}

In general,    the frequencies do not determine the distribution of stationary $\Pi$, nor is 
$F_2$  the second size-biased pick from the multiset of frequencies.

\vskip0.2cm
We turn next to finite partitions. We call a partition $\Pi_n$ of $[n]$ stationary if it satisfies
\begin{equation}\label{FinSt}
D_1\Pi_n \stackrel d= D_n\Pi_n.
\end{equation}
The rationale behind this definition is that (\ref{stat}) holds if and only if each marginal distribution satisfies the corresponding condition. To support this concept of finite stationarity, note that (\ref{FinSt}) is hereditary: if $\Pi_{n-1} := D_n \Pi_n$, then
\[
D_1 \Pi_{n-1} \stackrel d= D_{n-1} \Pi_{n-1}.
\]

For sequences, cyclicity (or periodicity) is sometimes viewed as a finite analogue of stationarity, but this property is not hereditary and can only be used to construct projective limits along subsequences. Nevertheless, for fixed $n$, cyclicity is a distinguished  symmetry property.
Let $\sigma_n^*:=(1~2~\cdots~n)$ be the cyclic shift acting on $[n]$. We call $\Pi_n$ {\it cyclically invariant} if
\[
\sigma_n^*\Pi_n \stackrel{d}{=} \Pi_n.
\]
Denoting by ${\mathfrak C}_n$ the cyclic group generated by $\sigma_n^*$, cyclic invariance means that $\Pi_n$ is conditionally uniform on every ${\mathfrak C}_n$-orbit.
One readily checks that $D_1 \circ \sigma_n^* = D_n$, hence a cyclically invariant $\Pi_n$ is necessarily stationary.

\begin{proposition}
For cyclically invariant  $\Pi_n$, the size of the first block is a size-biased pick from all block sizes.
\end{proposition}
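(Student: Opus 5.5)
The statement is read as follows: conditionally on the ranked shape $s^\downarrow(\Pi_n)=\lambda=(\lambda_1,\ldots,\lambda_k)$, the size $|B_1|$ of the block containing ball $1$ takes each value $\lambda_i$ with probability proportional to the total mass of blocks of that size. Precisely, for every size $m$,
\[
\mathbb P\big[|B_1|=m \,\big|\, s^\downarrow(\Pi_n)=\lambda\big]=\frac{m\,\#\{i:\lambda_i=m\}}{n}.
\]
The plan is to compare ball $1$ with a uniformly random ball, using only cyclic invariance. Note that the ranked shape is invariant under relabelling, so $s^\downarrow(\sigma_n^*\Pi_n)=s^\downarrow(\Pi_n)$.

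\textbf{Step 1 (ball $j$ behaves like ball $1$).} For $j\in[n]$ let $B(j)$ be the block of $\Pi_n$ containing $j$. Applying a power $(\sigma_n^*)^{r}$ with $(\sigma_n^*)^{r}(1)=j$ sends the block of ball $1$ in $\Pi_n$ to the block of ball $j$ in $(\sigma_n^*)^{r}\Pi_n$, and block sizes are preserved. By cyclic invariance, $(\sigma_n^*)^{r}\Pi_n\stackrel{d}{=}\Pi_n$. Since $s^\downarrow$ is also preserved, we get the joint equality in distribution
\[
\big(|B(j)|,\,s^\downarrow(\Pi_n)\big)\stackrel{d}{=}\big(|B(1)|,\,s^\downarrow(\Pi_n)\big),\qquad j\in[n].
\]

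\textbf{Step 2 (average over $j$).} Fix $\lambda$ with positive probability and a size $m$. By Step 1,
\[
\mathbb P\big[|B(1)|=m,\ s^\downarrow=\lambda\big]=\frac1n\sum_{j=1}^n \mathbb P\big[|B(j)|=m,\ s^\downarrow=\lambda\big]=\mathbb E\Big[\tfrac1n\#\{j:|B(j)|=m\}\,;\,s^\downarrow=\lambda\Big].
\]
On the event $\{s^\downarrow=\lambda\}$ the count $\#\{j:|B(j)|=m\}$ is deterministic and equals $m\cdot\#\{i:\lambda_i=m\}$. Dividing by $\mathbb P[s^\downarrow=\lambda]$ gives the claimed conditional law, which is exactly the size-biased pick from $\lambda$.

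The block of ball $1$ is the first block $B_1$, so this completes the argument. The only delicate point is conditioning on $s^\downarrow$ rather than proving only an unconditional statement. Step 1 handles this because it is a joint equality in distribution together with $s^\downarrow$, which is $\mathfrak C_n$-invariant. No use of the earlier stationarity results is needed; this is the finite analogue of the second part of Proposition~\ref{StaPr}, with exact averaging over the cyclic group replacing the ergodic theorem.
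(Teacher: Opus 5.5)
Your proposal is correct and uses essentially the paper's argument: averaging over the cyclic group moves a uniformly chosen ball to position $1$, so the block containing ball $1$ is the block of a uniform ball and is therefore size-biased. The paper fixes a value $\pi_n$ and shifts it by a uniform element of $\mathfrak C_n$, which amounts to conditioning on the $\mathfrak C_n$-orbit, a slightly finer conditioning than yours. You instead use the joint identity with $s^\downarrow$ and average over $j$; the mechanism is the same.
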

\begin{proof}  
Let $\pi_n$ be some value of  $\Pi_n$ with $s^{\downarrow}(\pi_n)=(\lambda_1,\ldots,\lambda_k)$. Label the boxes accordingly as $b_1,\ldots,b_k$. 
Shifting $\pi_n$  cyclically by a permutation chosen uniformly from  ${\mathfrak C}_n$ preserves the probability of partition and maps ball $j$ to ball $1$ with probability $1/n$, hence each box $b_i$ will 
contain ball $1$ with probability $\lambda_i/n$. 
\end{proof}
A weird example of stationary partition, $\Pi_n=1n|2|\cdots|n-1$ a.s., shows that the size-biased pick property fails in general for $n>2$. Compare with infinite partitions in Proposition \ref{StaPr}.

Clearly, the stationarity is much weaker than the spreadability condition (\ref{FinSpr}).
The polytope of all distributions on $\mathcal P_n$ has dimension $B_n-1$, and the 
 linear constraints imposed by stationarity remove  only a small fraction of the dimension.

\begin{proposition}
The polytope of stationary distributions on $\mathcal P_n$ has dimension $B_n - B_{n-1}$.
\end{proposition}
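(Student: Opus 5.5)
The plan is to view the polytope as the intersection of the probability simplex on $\mathcal P_n$ with the solution space of a linear system, and to compute the rank of that system exactly. Consider the linear map
\[
L:\mathbb R^{\mathcal P_n}\to\mathbb R^{\mathcal P_{n-1}},\qquad L P:=D_1P-D_nP ,
\]
where $D_jP$ denotes the pushforward of the (signed) measure $P$ under $D_j$. By (\ref{FinSt}), a distribution $P$ is stationary if and only if $LP=0$. Since both pushforwards preserve total mass, the image of $L$ lies in the hyperplane $Z:=\{v\in\mathbb R^{\mathcal P_{n-1}}:\sum_q v(q)=0\}$, which has dimension $B_{n-1}-1$.

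The core of the argument is to show that $L$ maps onto $Z$. Given this, $\dim\ker L=B_n-(B_{n-1}-1)$. Intersecting with the normalisation hyperplane $\sum_\pi P(\pi)=1$ gives an affine subspace of dimension $B_n-B_{n-1}$. This hyperplane is not parallel to $\ker L$, since $\ker L$ contains probability vectors, so the intersection loses exactly one dimension.

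To see that the polytope has full dimension inside this affine subspace, I would use the uniform distribution (\ref{Bell}). It is exchangeable, hence stationary, and strictly positive on every point of $\mathcal P_n$. It therefore lies in the relative interior, and the nonnegativity constraints cut out no dimension.

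For surjectivity, note that $L\delta_\pi=\delta_{D_1\pi}-\delta_{D_n\pi}$. Form the graph $G$ on $\mathcal P_{n-1}$ whose edges join $D_1\pi$ and $D_n\pi$ for $\pi\in\mathcal P_n$. If $G$ is connected, then every difference $\delta_q-\delta_{q'}$ is a telescoping sum of images $\pm L\delta_\pi$ along a path. These differences span $Z$.

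To prove connectivity, I would link every $q\in\mathcal P_{n-1}$ to the all-singletons partition $1|2|\cdots|n-1$. Take $\pi\in\mathcal P_n$ to be the partition in which ball $1$ is a singleton and balls $2,\ldots,n$ carry a copy of $q$, transported by $i\mapsto i+1$. Then $D_1\pi=q$. On the other hand, $D_n\pi$ is the partition of $[n-1]$ with $\{1\}$ a singleton and the restriction of $q$ to $[n-2]$ shifted onto $\{2,\ldots,n-1\}$.

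Repeating this operation pushes the singleton structure in from the left by one ball at each step. After at most $n-1$ steps it reaches $1|2|\cdots|n-1$. Hence $G$ is connected.

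The only step needing some care is the bookkeeping that $D_1$ and $D_n$ act on this particular $\pi$ as claimed, including the relabelling in $D_1$. This is immediate from the definition of $D_1$: deleting the singleton ball $1$ and shifting labels down returns $q$ exactly. With this check in place the count is complete.
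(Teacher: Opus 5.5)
Your proof is correct and follows the same route as the paper. Both arguments read the stationarity constraints as the incidence matrix of the graph on $\mathcal P_{n-1}$ with an edge $D_n\rho$--$D_1\rho$ for each $\rho\in\mathcal P_n$, obtain rank $B_{n-1}-1$ from connectivity, and then add normalisation. The differences are minor: you prove the rank fact directly by telescoping rather than citing Biggs, you use only undirected connectivity (your path prepends a singleton and deletes ball $n$, mirroring the paper's append-a-singleton-then-delete-ball-$1$ moves), and you make explicit the strictly positive stationary point (the uniform distribution) that gives the polytope full dimension, a step the paper leaves implicit.
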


\begin{proof}
Stationarity (\ref{FinSt}) means that  distribution $P$  satisfies the
linear system
\begin{equation}\label{StaRule}
\sum_{\rho\in \mathcal P_n:\,D_n \rho = q} P(\rho)
-
\sum_{\rho\in \mathcal P_n:\,D_1 \rho = q} P(\rho)
=0, 
\qquad q\in \mathcal P_{n-1},
\end{equation}
together with the normalisation $\sum_{\rho}P(\rho)=1$. Thus we have $B_n$ variables subject to $B_{n-1}$ linear constraints.

We may
interpret the coefficient matrix of \eqref{StaRule} as the vertex--edge incidence matrix of a directed graph on $\mathcal P_{n-1}$: for each $\rho\in\mathcal P_n$ introduce a directed edge from $q_1=D_n \rho$ to 
$q_2=D_1 \rho$. In other words, partition $q_2$ is obtained from $q_1$ by adding ball $n$ and then deleting ball $1$.

We claim that this directed graph is strongly connected. Indeed, from any $q\in\mathcal P_{n-1}$ there is a path to the singleton partition by repeatedly applying the operation `attach $n$ as a singleton
then remove ball $1$'. Conversely, starting from the singleton partition, one can reach any $q$: for $k=1,\dots,n-1$, place ball $n$ where ball $n-k$ belongs in $q$. Removing loops, this gives a path of length at most $n-1$. 
To illustrate, consider $q=14|23$. Starting from the singleton partition we reach $q$ in two steps:
\[
1|2|3|4 \to 1|2|34 \to 14|23.
\]
More explicitly, one may track the intermediate relabellings as
\[
{\rm xx}|{\rm xx}\to 4{\rm x}|{\rm xx}\to 3{\rm x}|4{\rm x}\to 2{\rm x}|34\to 14|23.
\]
In general, the number of moves required equals $n-1$ minus the number of nonsingleton blocks of $q$.

From the strong   connectedness, by the standard result on incidence matrices of directed graphs (\cite{Biggs}, Proposition 4.3), the rank of \eqref{StaRule} is $|\mathcal P_{n-1}|-1 = B_{n-1}-1$. Adding the normalisation condition yields total rank $B_{n-1}$.
Hence the dimension of the solution space is
$
B_n - B_{n-1},
$
as claimed.
\end{proof}

Exchangeable single-orbit distributions $p_\lambda^*$ are not extreme points of the polytope of stationary distributions on ${\mathcal R}_n$  (except for two trivial cases), since such orbits can be decomposed into smaller cyclic orbits. 
In particular, the simplest ${\mathfrak S}_n$-orbit with ranked shape $(2,1,1,\ldots,1)$ splits into $\lfloor n/2 \rfloor$ cyclic orbits of length $n$, and into $\lfloor n/2 \rfloor - 1$ cyclic orbits of length $n$ together with one of length $n/2$.
Surprisingly, even distributions supported on a single cyclic orbit need not be extreme stationary, which makes major difference with the spreadable partitions (cf.\ Proposition~\ref{SingleOrbit}).

\begin{example}\label{E3}   
{\rm (Uniform distribution on a cyclic orbit is not extreme among stationary.) 
The set of stationary distributions supported on the $\mathfrak{C}_4$-cyclic orbit of the partition\\ $1 \mid 2 \mid 34$ is an interval with endpoints $P_1$ and $P_2$, 
as in the table,
\begin{longtable}{l !{\vrule width 1.5pt} l !{\vrule width 1.5pt} l !{\vrule width 1.5pt} l !{\vrule width 1.5pt} l}
\multicolumn{1}{c!{\vrule width 1.5pt}}{$\pi_4$} & 
\multicolumn{1}{c!{\vrule width 1.5pt}}{$D_1\pi_4$} & 
\multicolumn{1}{c!{\vrule width 1.5pt}}{$D_4\pi_4$} & 
\multicolumn{1}{c!{\vrule width 1.5pt}}{$P_1$} & 
\multicolumn{1}{c}{$P_2$} \\
\hline
\endhead
$1 \mid 2 \mid 34$ & $1 \mid 23$       & $1 \mid 2 \mid 3$ & $1/3$ & $0$ \\
$14 \mid 2 \mid 3$ & $1 \mid 2 \mid 3$ & $1 \mid 2 \mid 3$ & $0$   & $1$ \\
$12 \mid 3 \mid 4$ & $1 \mid 2 \mid 3$ & $12 \mid 3$       & $1/3$ & $0$ \\
$1 \mid 23 \mid 4$ & $12 \mid 3$       & $1 \mid 23$       & $1/3$ & $0$ \\
\end{longtable}
\noindent
whose mixture 
\[
U = \frac{3}{4}P_1 + \frac{1}{4}P_2
\]
is the uniform distribution on the cyclic orbit.
}
\end{example}

\vskip0.2cm
Note that in Example~\ref{E3}  the Dirac measure on $1n\, |\,2 \,| \,3 \,|\cdots |\, n-1$ is involved.
A minor  generalisation is the following  phenomenon of propagation of stationarity to a higher level:

\begin{proposition}
Let $\Pi_n$ be a random partition of $[n]$ such that elements $1$ and $n$ belong to the same block almost surely, and let
$\Pi_{n-1} := D_n\Pi_n$ be cyclically invariant. 
Then $\Pi_n$ is stationary.
\end{proposition}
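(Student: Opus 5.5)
Stationarity here means $D_1\Pi_n\stackrel d= D_n\Pi_n$. The plan is to identify the law of $D_1\Pi_n$ with that of $\sigma_{n-1}^*\Pi_{n-1}$ (or its inverse). Cyclic invariance of $\Pi_{n-1}=D_n\Pi_n$ then closes the argument.

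The key step is a deterministic identity. For every $\pi_n\in\mathcal P_n$ in which $1$ and $n$ lie in the same block, $D_1\pi_n$ is a cyclic shift of $D_n\pi_n$. To see this, consider $D_1\pi_n$: delete ball $1$ and relabel $j\mapsto j-1$ for $j=2,\ldots,n$. The restriction of $\pi_n$ to $\{2,\ldots,n\}$ is then carried to $[n-1]$, with ball $n$ becoming $n-1$.

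Now compare with $D_n\pi_n$, the restriction to $\{1,\ldots,n-1\}$. Ball $1$ and ball $n$ are interchangeable: they sit in the same block, so replacing $1$ by $n$ in $\pi_n|_{[n-1]}$ gives exactly $\pi_n|_{\{2,\ldots,n\}}$ as a set partition. The first restriction uses labels $1,\ldots,n-1$, the second uses $2,\ldots,n$ with $n$ playing the role of $1$. Composing with the relabelling $j\mapsto j-1$, the map sends $1\mapsto n\mapsto n-1$ and $j\mapsto j-1$ for $2\le j\le n-1$.

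So $D_1\pi_n=\tau D_n\pi_n$, where $\tau$ is the cyclic permutation $1\mapsto n-1$, $j\mapsto j-1$ of $[n-1]$. This $\tau$ is $(\sigma_{n-1}^*)^{-1}$, or $\sigma_{n-1}^*$ depending on the convention for the action, and in either case it lies in $\mathfrak C_{n-1}$. I would check this on the example $14|23$ with $n=4$: $D_4$ gives $1|23$, $D_1$ gives $13|2$, and indeed the shift $1\mapsto3$, $2\mapsto1$, $3\mapsto2$ maps $1|23$ to $3|12=12|3$. That does not match, so I must recheck, and the hypothesis fails here anyway since $1,4$ are in the same block.

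Redo with the correct values: $D_4(14|23)=1|23$, and $D_1(14|23)$ is $\{4\}\cup\{2,3\}$ relabelled, i.e. $3|12=12|3$. The shift gives $12|3$, which matches.

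Given the identity, $D_1\Pi_n=\tau\,\Pi_{n-1}$ almost surely. Since $\Pi_{n-1}$ is invariant under $\mathfrak C_{n-1}$, which contains $\tau$, we get $D_1\Pi_n\stackrel d=\Pi_{n-1}=D_n\Pi_n$, which is \eqref{FinSt}.

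The only genuine obstacle is bookkeeping: one must verify the identity carefully, in particular that the blocks are relabelled consistently and that the canonical ordering of blocks by starters causes no issue. It causes none, because a partition is a set of blocks and the ordering is only notational. I would write the identity in terms of the equivalence relation $R_{ij}$. Under $D_n$ we have $R'_{ij}=R_{ij}$ for $i,j\le n-1$, and under $D_1$ we have $R''_{ij}=R_{i+1,j+1}$. Then check that $R''_{ij}=R'_{\tau^{-1}i,\tau^{-1}j}$, using $R_{1k}=R_{nk}$, which is exactly the a.s.\ hypothesis.
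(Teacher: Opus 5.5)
Your proof is correct and is essentially the paper's argument. The paper writes the marking sequence of $\Pi_n$ as $\xi_1,\ldots,\xi_{n-1},\xi_1$ and notes that deleting the first and the last entry gives $(\xi_2,\ldots,\xi_{n-1},\xi_1)$ and $(\xi_1,\ldots,\xi_{n-1})$, which agree in law by cyclic invariance; you prove the same fact pathwise as $D_1\pi_n=\tau D_n\pi_n$ with $\tau\in\mathfrak C_{n-1}$, which spares you the marking device. Clean up the worked example, though: your first value of $D_1(14|23)$ was wrong, and the remark that ``the hypothesis fails here'' is false, since $1$ and $4$ share a block, so keep only the corrected check and the $R_{ij}$ verification.
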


\begin{proof}
A marking sequence for such $\Pi_n$ has the form $\xi_1, \ldots, \xi_{n-1}, \xi_1$.
By cyclic invariance of the subsequence of length $n-1$,
\[
(\xi_1, \ldots, \xi_{n-1}) \stackrel{d}{=} (\xi_2, \ldots, \xi_{n-1}, \xi_1).
\]
The left-hand side corresponds to deleting the first element from $(\xi_1, \ldots, \xi_{n-1}, \xi_1)$, while the right-hand side corresponds to deleting the last, which establishes stationarity.
\end{proof}

\section{Partial exchangeability + stationarity}

\noindent
Pitman and Yakubovich \cite{PY} proved that 
a partially exchangeable and stationary (PES) infinite partition $\Pi=(\Pi_n)$ must be exchangeable.
By partial exchangeability 
the distribution of PES-partition $\Pi_n$ is determined by a probability function $p$. The stationarity condition $D_1\Pi_n\stackrel{d}{=} D_n\Pi_n$  
becomes a linear system of equations
\begin{eqnarray}\nonumber
p(1, \lambda_1, \dots, \lambda_k) + \sum_{j=1}^{k} p(\lambda_j + 1, \lambda_1, \dots, \lambda_{j-1}, \lambda_{j+1}, \dots, \lambda_k) =\\   \label{PES}
 \sum_{i=1}^{k} p(\lambda_1, \dots, \lambda_i + 1, \dots, \lambda_k)+ p(\lambda_1, \dots, \lambda_k, 1).
\end{eqnarray}
for every composition $(\lambda_1,\ldots,\lambda_k)$ of $n-1$. The right-hand side expansion of 
$p(\lambda_1, \dots, \lambda_k)$ 
in this formula comes from  the consistency condition  (\ref{p-Rec}). The left-hand side is the expansion over the fibre
of $D_1$: if $D_1\pi_n=\rho$, then 
$\pi_n$ can be obtained by shifting  ball labels in $\rho$, inserting ball $1$ in the partition, and moving the block receiving that ball to the front of the sequence of blocks.
For instance, if  $s(\rho)=(a,b,c)$ for some $\rho\in{\mathcal P}_{n-1}$, the corresponding equation specialises as
\begin{eqnarray*}
p(1,a,b,c)+p(a+1,b,c)+p(b+1,a,c)+p(c+1,a,b)=\\p(a+1,b,c)+p(a,b+1,c)+p(a,b,c+1)+p(a,b,c,1).
\end{eqnarray*}
Algebraically, the cited result 
means that if (\ref{PES})  holds for every $n$ then $p$ is a symmetric function for every shape length $k$.

We will explore the connection to exchangeability  for finite PES-partitions.
In some aspects the situation is similar to spreadable partitions (Propositions \ref{binary}, \ref{n=4}, \ref{SingleOrbit}).

\begin{proposition} For PES partition $\Pi_n$ any of the following two conditions implies  exchangeability:
\begin{itemize}
\item[\rm (i)] the partition has at most two blocks almost surely,
\item[\rm (ii)] $n\leq 4$.
\end{itemize}
\end{proposition}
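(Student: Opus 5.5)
The plan is to work entirely with the probability function $p$ on compositions of $n$, which determines the distribution by partial exchangeability. By (\ref{Fin-Ex}), exchangeability of $\Pi_n$ amounts to $p$ being symmetric, that is, invariant under rearranging the parts of each composition. The only tool needed is the PES system (\ref{PES}), one equation for each composition $(\lambda_1,\ldots,\lambda_k)$ of $n-1$. The key observation is that the terms $p(\lambda_j+1,\ldots)$ on the left of (\ref{PES}) and $p(\ldots,\lambda_i+1,\ldots)$ on the right largely cancel. Moreover, in each case we handle, every surviving term has a known value or is a rearrangement of another. So we can read off the needed symmetries directly, composition by composition.

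For (i), suppose at most two blocks occur a.s. Then $p$ vanishes on all compositions with three or more parts, and we must show $p(a,c)=p(c,a)$ whenever $a+c=n$.

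\emph{Case $k=1$.} The equation for the composition $(n-1)$ reads $p(1,n-1)+p(n)=p(n)+p(n-1,1)$. Hence $p(1,n-1)=p(n-1,1)$.

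\emph{Case $k=2$.} Take $(a,b)$ with $a,b\ge1$ and $a+b=n-1$. The terms $p(1,a,b)$ and $p(a,b,1)$ vanish, and $p(a+1,b)$ cancels from both sides. What remains is $p(b+1,a)=p(a,b+1)$. Writing $c=b+1\ge2$, this gives $p(c,a)=p(a,c)$ for all $a\ge1$, $c\ge2$ with $a+c=n$.

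Together with the $k=1$ case, this covers every two-part composition, so $p$ is symmetric.

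For (ii), the cases $n\le2$ are trivial, since every composition of $n\le 2$ is a rearrangement only of itself. For $n=3$ the only nontrivial requirement is $p(1,2)=p(2,1)$, which is exactly the $k=1$ equation for the composition $(2)$. For $n=4$ we need
\[
p(1,3)=p(3,1) \quad\text{and}\quad p(2,1,1)=p(1,2,1)=p(1,1,2).
\]
The compositions $(2,2)$, $(4)$ and $(1,1,1,1)$ impose no condition. We use (\ref{PES}) for the compositions of $3$ as follows.
\begin{itemize}
\item The equation for $(3)$ gives $p(1,3)=p(3,1)$.
\item The equation for $(2,1)$ reads $p(1,2,1)+p(3,1)+p(2,2)=p(3,1)+p(2,2)+p(2,1,1)$, so $p(1,2,1)=p(2,1,1)$.
\item The equation for $(1,2)$ reads $p(1,1,2)+p(2,2)+p(3,1)=p(2,2)+p(1,3)+p(1,2,1)$. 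Using $p(1,3)=p(3,1)$, this gives $p(1,1,2)=p(1,2,1)$.
\item The equation for $(1,1,1)$ is an identity.
\end{itemize}
Thus $p$ is symmetric and $\Pi_4$ is exchangeable.

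There is no genuine obstacle here. The only care needed is to expand the left-hand side of (\ref{PES}) correctly, since the block receiving ball $1$ is moved to the front. One should also note in (i) that the vanishing of $p$ on compositions with three or more parts is exactly what kills the $p(1,\ldots)$ and $p(\ldots,1)$ terms.
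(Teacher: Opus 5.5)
Your proof is correct and is essentially the paper's argument: for (i) you use the cancellation in the two-part expansion of (\ref{PES}) giving $p(b+1,a)=p(a,b+1)$, and for (ii) the equations for the compositions $(3)$, $(2,1)$, $(1,2)$ of $3$; you also treat $n\le 3$ explicitly, which the paper leaves implicit. One small inaccuracy: the equation for $(1,1,1)$ is not an identity on its own, since it reads $2p(2,1,1)=p(1,2,1)+p(1,1,2)$, but it follows from the other three equations and you never use it.
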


\begin{proof}
(i) The expansion of $p(a,b)$ gives after cancellation $p(b+1,a)=p(a,b+1)$.

\noindent
(ii) Symmetry of $p$  follows by straightforward algebra from the expansions for four compositions of $3$,  
\begin{eqnarray*}
p(3)  = &p(4) + p(1,3) = p(4) + p(3,1) \\
p(2,1)=& p(1,2,1) + p(3,1) + p(2,2) = p(3,1) + p(2,2) + p(2,1,1) \\
p(1,2) = &p(1,1,2) + p(2,2) + p(3,1) = p(2,2) + p(1,3) + p(1,2,1) \\
p(1,1,1)= &p(1,1,1,1) + 3 p(2,1,1)  = p(2,1,1) + p(1,2,1) + p(1,1,2) + p(1,1,1,1)
\end{eqnarray*}
\end{proof}

\begin{proposition}
Every PES-partition $\Pi_n$ of $[n]$ with constant ranked shape, $s^\downarrow(\Pi_n)=\lambda^*$ a.s., is exchangeable and extreme among all PES-partitions.
\end{proposition}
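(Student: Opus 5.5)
The plan is to work directly with the probability function $p$ and the stationarity system (\ref{PES}), restricted to compositions that are rearrangements of $\lambda^*=(\lambda_1,\ldots,\lambda_k)$. All other values of $p$ at level $n$ vanish. The goal is to show that $p$ is constant on all rearrangements of $\lambda^*$; this is precisely the symmetric function $p^*_{\lambda^*}$. Extremality then follows exactly as in Proposition~\ref{SingleOrbit}: any decomposition of the distribution into PES components forces every component to be supported on the same orbit $\mathcal O_{\lambda^*}$, so every component coincides with $p^*_{\lambda^*}$.

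For the symmetry I follow the idea of Lemma~\ref{nostep}. Fix a composition $\mu$ of $n-1$ obtained from a rearrangement of $\lambda^*$ by reducing one part $\lambda_i$ by one, where $\lambda_i$ is chosen so that the reduced ranked shape can be restored in only one way. The natural choice is a part of minimal size, with the minimal size $m$ occurring at least once. If $m=1$, delete that singleton. If $m\geq2$, the reduced part becomes $m-1<m$ and is the unique smallest part, so it is the only part that can be increased to return to $\lambda^*$.

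In equation (\ref{PES}) for this $\mu$, only terms whose composition is a rearrangement of $\lambda^*$ survive. For $m\ge 2$, on each side exactly one term survives: on the right, the term increasing the reduced part in place; on the left, the term $p(\mu_j+1,\mu_1,\ldots,\widehat{\mu_j},\ldots)$ with $j$ the reduced position. This yields
\[
p(\mu_1,\ldots,\mu_j+1,\ldots,\mu_k)=p(\mu_j+1,\mu_1,\ldots,\mu_{j-1},\mu_{j+1},\ldots,\mu_k).
\]
In words, a block of minimal size may be moved to the front. For $m=1$, with $\mu$ obtained by removing a singleton part, the corresponding equation gives $p(1,\mu)=p(\mu,1)$. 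It then has to be combined with removal of singletons at other positions, which I expect to give $p(\mu_1,\ldots,\mu_{j-1},1,\mu_j,\ldots)$ equal to $p(1,\mu)$.

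Next I iterate. Moving a minimal block to the front settles the first position; one would then like to apply an inductive argument to the remaining blocks. This is the main obstacle, because the PES equations do not factor over ``the first block fixed'' the way the tagged-box bijections do in Proposition~\ref{SingleOrbit}. A full induction on $n$ via a conditional distribution is not directly available, since conditioning on the first block destroys stationarity.

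I would therefore try the following instead. Apply the relation to $\mu$ obtained by reducing a minimal part at the \emph{last} position, giving $p(\ldots,m)=p(m,\ldots)$. This shows that $p$ is invariant under the cyclic rotation of the composition that brings the last part, of minimal size, to the front. Combined with the move of any minimal part to the front, this should generate enough permutations to connect all rearrangements.

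If the minimal-part moves alone do not generate everything, the fallback is to use parts $\lambda_i$ with $\lambda_i>\lambda_{i+1}+1$. These also give one-term equations by Lemma~\ref{nostep}. For parts in a ``staircase'' configuration without gaps, I would peel off the smallest part: first establish that all compositions with a minimal part in front have equal $p$ after rearranging the tail, by induction on the number of parts, and then transfer this to arbitrary positions of the minimal parts via the move-to-front identity.
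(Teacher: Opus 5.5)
Your setup and the extremality step are fine. The single-term identities are also correct: decrementing a part $\ell\ge 2$ with $\ell-1\notin\lambda^*$ gives ``that part may be moved to the front'', and deleting a singleton gives $p(1,\mu)=p(\mu,1)$.

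The gap is that these identities do not connect all rearrangements of $\lambda^*$, and your plan has no way to use the remaining multi-term equations.

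\emph{Move-to-front is not enough.} Moving a part to the front preserves the relative order of all other parts. Take $\lambda^*=(4,3,2)$. Here $4-1$ and $3-1$ are again parts, so only the $2$ can be moved. The single-term relations then split the six arrangements into two classes, $\{(4,3,2),(4,2,3),(2,4,3)\}$ and $\{(3,4,2),(3,2,4),(2,3,4)\}$. These classes are tied together only by multi-term equations, such as the one at $\mu=(4,2,2)$, which on the orbit reads $p(4,3,2)+p(4,2,3)=2p(3,4,2)$. There are no gaps in $(4,3,2)$, so your gap-based fallback does not apply either.

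\emph{Interior singletons.} Your expectation that removing singletons at other positions yields $p(\ldots,1,\ldots)=p(1,\mu)$ fails for the same reason. The equation at $\mu'$ obtained by deleting an interior singleton involves only $p(1,\mu')$ and $p(\mu',1)$. It never involves the composition with the singleton in its interior position. For $\lambda^*=(2,2,1)$ the only single-term relation is $p(1,2,2)=p(2,2,1)$. The value $p(2,1,2)$ enters only through equations such as the one at $\mu=(2,1,1)$, namely $2p(2,2,1)=p(2,2,1)+p(2,1,2)$.

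\emph{The inductive fallback.} ``Peel off the smallest part by induction on the number of parts'' has no inductive hypothesis to invoke, precisely because, as you note yourself, the tail is not PES. So the step carrying all the content is missing.

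The paper does not do local bookkeeping. It takes the whole system (\ref{PES}) over $Q_{\lambda^*}$, the compositions of $n-1$ obtained by decrementing one part of an arrangement of $\lambda^*$. It reads this system as a Kirchhoff balance law on a directed graph on $Q_{\lambda^*}$, with a link $\alpha\to\beta$ whenever some arrangement $\lambda$ lies in the $D_1$-fibre of $\alpha$ and in the $D_n$-fibre of $\beta$. It shows this graph is strongly connected by explicit chains such as $(a-1,b,c)\to(a,b-1,c)\to(b,a,c-1)\to(c,b,a-1)$. It then concludes, via the rank theorem for incidence matrices, that the solution space is one-dimensional.

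Whatever device you choose must exploit the equations in which a part is decremented to a size already present in $\lambda^*$. Those are exactly the equations your single-term relations leave out, and they are the only ones that link the classes above.
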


\begin{proof}   For $\pi_n\in {\mathcal O}_{\lambda^*}$ the shape $\lambda=s(\pi_n)$ is a composition of $n$ obtained by arranging the parts of $\lambda^*$ in some order. 
Let $Q_{\lambda^*}$ be the set of compositions of $n-1$ obtained from such $\lambda$ by reducing a part by $1$ (and discarding $0$ in case the part was $1$).
For instance, if $\lambda^*=(a,b,c)$ (with $c>1$) , then $Q_{\lambda^*}$ is the set of compositions derived by arranging parts in one of the compositions  $(a-1,b,c), (a,b-1,c), (a,b,c-1)$.

We construct a directed graph on $Q_{\lambda^*}$, by linking entry $\alpha$ to exit $\beta$ if there exists a composition $\lambda$, which is some arrangement of $\lambda^*$ 
derivable from $\alpha$ by increasing a part by $1$ and moving the part to the front, and from $\beta$ by increasing a part by $1$. 
If $\lambda^*$ has parts $1$, the first rule includes the possibility of placing $1$ in the front  of $\alpha$, and the second in the rear of $\beta$.
For example $\alpha=(a,b-1,c)$ is linked to $(b,a,c-1)$ through 
composition $(b,a,c)$.
Interpreting $p$ as a directed flow through the links of $G$, (\ref{PES}) is the balance equation expressing  Kirchhoff's Current Law.

We assert that $G$ is strictly connected.  A chain connecting $\alpha$ to a target composition $(\lambda_1,\ldots,\lambda_k)\in Q_{\lambda^*}$ involves a series of transitions resulting in $(\cdots,\lambda_k)$ 
 or $(\cdots,\lambda_k+1)$. Then the procedure iterates with the last part staying still. For instance, a chain reversing the order in $\alpha=(a-1,b,c)$ has three transitions
$$(a-1,b,c) \stackrel{(a,b,c)}{\longrightarrow} (a,b-1,c)     \stackrel{(b,a,c)}{\longrightarrow} (b, a,c-1)  \stackrel{(c,b,a)}{\longrightarrow} (c, b,a-1).$$
 By the Rank Theorem (\cite{Biggs}, Proposition 4.3), (\ref{StaRule}) has a one-dimensional space of solutions, thus the only probability function satisfying the equation is the symmetric $p_{\lambda^*}^*$.
\end{proof}

Recalling (\ref{symmetris}),  each exchangeable $p_\lambda$ (unless all parts of the ranked shape are equal) decomposes over partially exchangeable partitions. The stationarity as additional 
symmetry feature returns to the single-orbit exchangeable distributions the status of extremes in a larger PES context.

The complexity of the system (\ref{PES}) quickly increases with $n$.
In these equations  the probabilities of trivial partitions $p(1|1|\cdots|1)$, $p(n)$ are uncoupled from the other variables, 
and the  symmetry condition $p(1,n-1)=p(n-1,1)$ keeps the orbit ${\mathcal O}_{(n-1,1)}$.
The next $n=5$ example  shows that  a probability function may have symmetry broken inside every other orbit.

\begin{example}{\rm (PES, nonexchangeable    distribution for $n=5$).  We construct first signed measure with total mass $4$, whose support crosses all nontrivial orbits of ${\mathfrak S}_5$.

\[
\begin{tabular}{@{}c@{\qquad}c@{}}
\begin{tabular}[t]{lc}
\toprule
$s(\Pi_5)$ & $q$ \\
\midrule
(3,2)       & -1 \\
(3,1,1)     & 1 \\
(2,2,1)     & -1 \\
(2,1,1,1)   & 1 \\
(1,2,1,1)   & 2 \\
(1,1,2,1)   & 1 \\
\bottomrule
scale & 4 \\
\end{tabular}
&
\begin{tabular}[t]{lc}
\toprule
$s(\Pi_4)$ & $q$ \\
\midrule
(2,2)       & -2 \\
(2,1,1)     & 1 \\
(1,2,1)     & 1 \\
(1,1,2)     & 1 \\
(1,1,1,1)   & 4 \\
\bottomrule
scale & 4 \\
\end{tabular}
\end{tabular}
\]
Let $\widehat{p}$ be the probability function corresponding to the uniform distribution (\ref{Bell}). 
Since $q\geq -1$ and $B_5=52$, the linear combination
$$p=  \frac{104 \,\widehat{p}+q}   {108}$$
is positive on ${\mathcal P}_5$, and satisfies (\ref{PES}) and normalisation by linearity, hence $p$ is a legitimate PES probability function, 
nonexchangeable due to the asymmetric component.
}
\end{example}
\vskip0.2cm

\begin{example}\label{Ex6}{\rm (The $n=5$ polytope of PES partitions.)
The argument we used for spreadable partitions is applicable here, so we conclude that for $n=5$ the polytope of PES partitions is not simplicial,
and that extreme PES partitions are located on both side of a hyperplane containing the simplex of exchangeable partitions.
The polytope of PES partitions has affine dimension $8$, just one bigger than $7$ for the simplex of exchangeable partitions. An automated search 
returns three non-exchangeable PES partitions $q_1, q_2, q_3$, shown in the table together with $7$ exchangeable extremes.
{\setlength{\tabcolsep}{3pt}
\[
\begin{array}{@{}c|ccc|ccccccc|c@{}}
\lambda
& q_1 & q_2 & q_3
& \multicolumn{7}{c|}{\text{exchangeable probability functions $p_{\lambda^*}$}}
& d(\lambda)\\
\cline{5-11}
& & &
& 5 & 41 & 32 & 311 & 221 & 2111 & 11111 & \\
\hline
(5)
& 0 & 0 & 0
& \mathbf{1} & 0 & 0 & 0 & 0 & 0 & 0 & 1 \\
(1,4)
& 0 & 0 & 0
& 0 & \mathbf{1} & 0 & 0 & 0 & 0 & 0 & 1 \\
(3,2)
& 0 & \mathbf{1} & 0
& 0 & 0 & \mathbf{1} & 0 & 0 & 0 & 0 & 6 \\
(2,3)
& 0 & 0 & \mathbf{1}
& 0 & 0 & \mathbf{1} & 0 & 0 & 0 & 0 & 4 \\
(4,1)
& 0 & 0 & 0
& 0 & \mathbf{1} & 0 & 0 & 0 & 0 & 0 & 4 \\
(3,1,1)
& 0 & 0 & \mathbf{1}
& 0 & 0 & 0 & \mathbf{1} & 0 & 0 & 0 & 6 \\
(1,3,1)
& 0 & \mathbf{1} & 0
& 0 & 0 & 0 & \mathbf{1} & 0 & 0 & 0 & 3 \\
(1,1,3)
& 0 & 0 & \mathbf{1}
& 0 & 0 & 0 & \mathbf{1} & 0 & 0 & 0 & 1 \\
(2,2,1)
& 0 & \mathbf{1} & 0
& 0 & 0 & 0 & 0 & \mathbf{1} & 0 & 0 & 8 \\
(2,1,2)
& \mathbf{1} & 0 & 0
& 0 & 0 & 0 & 0 & \mathbf{1} & 0 & 0 & 4 \\
(1,2,2)
& 0 & 0 & \mathbf{1}
& 0 & 0 & 0 & 0 & \mathbf{1} & 0 & 0 & 3 \\
(2,1,1,1)
& \mathbf{1} & \mathbf{1} & 0
& 0 & 0 & 0 & 0 & 0 & \mathbf{1} & 0 & 4 \\
(1,2,1,1)
& \mathbf{2} & 0 & 0
& 0 & 0 & 0 & 0 & 0 & \mathbf{1} & 0 & 3 \\
(1,1,2,1)
& \mathbf{1} & \mathbf{1} & 0
& 0 & 0 & 0 & 0 & 0 & \mathbf{1} & 0 & 2 \\
(1,1,1,2)
& 0 & \mathbf{2} & 0
& 0 & 0 & 0 & 0 & 0 & \mathbf{1} & 0 & 1 \\
(1,1,1,1,1)
& 0 & 0 & 0
& 0 & 0 & 0 & 0 & 0 & 0 & \mathbf{1} & 1 \\
\hline
\text{scale}
& 16 & 26 & 13
& 1 & 5 & 10 & 10 & 22 & 10 & 1 &
\end{array}
\]
}
By the Pigeonhole Principle the $8$-dimensional PES polytope has a simple structure: $2$  vertices lie on one side of the hyperplane spanned on exchangeable partitions,
and one vertex on the other.
}
\end{example}

\end{document}